\providecommand{\D}[1]{\,\mathrm{d}#1}
\DeclareMathOperator{\diag}{diag}
\DeclareMathOperator{\supp}{supp}
\DeclareMathOperator{\diam}{diam}
\newtheorem{lem}{Lemma}
\newtheorem{thm}{Theorem}
\newtheorem*{remark}{Remark}
\title{Multi-scale methods for wave propagation in heterogeneous media}
\author{Bj\"orn Engquist \and Henrik Holst \and Olof Runborg}
\date{2009-11-12}
\begin{document}

\maketitle

\begin{abstract}
	Multi-scale wave propagation problems are computationally costly to solve by traditional techniques because the smallest
	scales must be represented over a domain determined by the largest scales of the problem.  We have developed and analyzed
	new numerical methods for multi-scale wave propagation in the framework of heterogeneous multi-scale method.  The numerical
	methods couples simulations on macro- and micro-scales for problems with rapidly oscillating coefficients. We show that the
	complexity of the new method is significantly lower than that of traditional techniques with a computational cost that is
	essentially independent of the micro-scale. A convergence proof is given and numerical results are presented for periodic
	problems in one, two and three dimensions. The method is also successfully applied to non-periodic problems and for long
	time integration where dispersive effects occur.
\end{abstract}

\section{Introduction} \label{section:introduction}

We consider the initial boundary value problem for the scalar wave equation,
\begin{equation}
	\begin{cases}
		u^{\varepsilon}_{tt} - \nabla \cdot A^{\varepsilon} \nabla u^{\varepsilon} = 0, & \Omega \times \{ 0 \leq t \leq T \}, \\
		u^{\varepsilon} = f, \quad u^{\varepsilon}_t = g, & \Omega \times \{ t = 0 \}, 
	\end{cases}
	\label{eq:introduction:wave}
\end{equation}
on a smooth domain $\Omega \subset \mathbb{R}^N$ with $A^{\varepsilon}(x)$ a symmetric uniformly positive definite matrix.  We
assume that $A^{\varepsilon}$ has oscillations on a scale proportional to $\varepsilon \ll 1$. The solution of
\eqref{eq:introduction:wave} will then also be highly oscillating in both time and spatial directions on the scale $\varepsilon$. It
is typically very computationally costly to solve these kinds of multi-scale problems by traditional numerical techniques. The
smallest scale must be well represented over a domain, which is determined by the largest scales.  For wave propagation small scales
may also originate from high frequencies in initial data or boundary data.  We will however focus on the case when they come from
strong variations in the wave velocity field.  Such variable velocity problems occur for example in seismic wave propagation in
subsurface domains with inhomogeneous material properties and microwave propagation in complex geometries.  

Recently, new frameworks for numerical multi-scale methods have been proposed, including the heterogeneous multi-scale
method~(HMM)~\cite{e2003} and the equation free methods~\cite{kevrekidis2003}.  These methods couple simulations on macro- and
micro-scales.  We use HMM, \cite{e2003,e2007,e2003b}, in which a numerical macro-scale method gets necessary information from
micro-scale models that are only solved on small sub domains. This framework has been applied to a number multi-scale problems, for
example, ODEs with multiple time scales~\cite{engquist2005}, elliptic and parabolic equations with multi-scale
coefficients~\cite{e2004,ming2005,abdulle2003}, kinetic schemes~\cite{e2007} and large scale MD simulation of gas
dynamics~\cite{li2005}. 

On the macro-scale we will assume a simple flux from,
\begin{equation} 
	\tilde{u}_{tt} - \nabla \cdot F = 0, 
\end{equation} 
in our HMM approximation of the wave equation \eqref{eq:introduction:wave}.  The solution $\tilde{u}$ should be a good approximation
of the solution to \eqref{eq:introduction:wave} and the value of $F$ on the macro-scale grid is computed by numerically
approximating \eqref{eq:introduction:wave} on small micro-scale domains. 

The goal of our research is to better understand the HMM process with wave propagation as example and also to derive computational
techniques for future practical wave equation applications. One contribution is a convergence proof in the multidimensional case
that includes a discussion on computational complexity. The analysis is partially based on the mathematical homogenization theory
for coefficients $A^{\varepsilon}$ with periodic oscillations~\cite{bensoussan1978,cioranescu1999}. 

Classical homogenization considers partial differential equations with rapidly oscillating coefficients.  As the period of the
coefficients in the PDE goes to zero, the solution approaches the solution to another PDE, a homogenized PDE. The coefficients in
the homogenized PDE has no $\varepsilon$ dependency.  For example, in the setting of composite materials consisting of two or more
mixed constituents (i.e., thin laminated layers $\varepsilon$ periodic), homogenization theory gives the macroscopic properties of
the composite.  It is an interesting remark that the macroscopic properties are often different than the average of the individual
constituents that makes up the composite~\cite{cioranescu1999}.  The wave equation~\eqref{eq:introduction:wave}, with
$A^{\varepsilon}(x) = A(x,x/\varepsilon)$ and $A(x,y)$ is periodic in $y$, have an homogenized equation,
\begin{equation}
	\begin{cases}
		\bar{u}_{tt} - \nabla \cdot \bar{A} \nabla \bar{u} = 0, & \Omega \times \{ 0 \leq t \leq T \}, \\
		\bar{u} = \bar{f}, \quad \bar{u}_t = \bar{g}, & \Omega \times \{ t = 0 \},
	\end{cases}
	\label{eq:introduction:wavebar}
\end{equation}
where $\bar{A}(x)$ is called the homogenized or effective coefficient.
The homogenized solution $\bar{u}$ can be used as an approximation of the solution $u^{\varepsilon}$ of the full equation since
$u^{\varepsilon}(x) = \bar{u}(x) + \mathcal{O}(\varepsilon)$.
Note that, the homogenized equations are often less expensive to solve with numerical methods, since the coefficients
varies slowly without $\varepsilon$ variations.
We refer to~\cite{bensoussan1978,nguetseng1989,cioranescu1999,jikov1991,marchenko2006,engquist2008} for
more about homogenization in general.

It should be noted that even if our numerical methods use ideas from homogenization theory they do not solve the homogenized
equations directly. The goal is to develop computational techniques that can be used when there is no known homogenized equation
available. In the research presented here many of the homogenized equations are actually available and could in practice be
numerically directly approximated.  We have chosen this case in order to be able to develop a rigorous convergence analysis and to
have a well-understood environment for numerical tests.  We also apply the techniques to problems that does not fit the theory. In
example \ref{section:example-three} an equation with non-periodic coefficients is approximated and in example
\ref{section:example-longtime} an equation is solved over very long time.  The latter is particularly interesting since the
homogenized solution contains dispersive effects, which influence the solution for $t=\mathcal{O}(\varepsilon^{-2})$. This
dispersive process is captured by a high accuracy HMM technique without explicit approximation of any dispersive term.

The article is organized as follows:
In section~\ref{section:hmm} we discuss first the HMM framework in a general setting and thereafter in
section~\ref{section:hmm_wave_equation} our HMM method for the wave equation. We give a rigorous proof of the approximation error by
the HMM method in the periodic coefficient case in section~\ref{section:convtheory}.  In section~\ref{section:results} we show
numerical results, which also includes a non-periodic problem and an example with very long time. The last
section~\ref{section:conclusions} ends this paper with our conclusions.

\section{Heterogeneous multi-scale methods (HMM)} \label{section:hmm}

In the HMM framework, the general setting of a multi-scale problem is the following: We assume that there exists two models, a
micro model $f(u,d) = 0$ describing the full problem and a coarse macro model $F(\tilde{u},\tilde{d}) = 0$.  The micro model is
accurate but is expensive to compute by traditional methods.  The macro model give a coarse scale or low frequency solution
$\tilde{u}$, assumed to be a good approximation of the micro-scale solution $u$ and is less expensive to compute.  The model is
however incomplete in some sense and requires additional data.  We assume that $F(\tilde{u},\tilde{d}) = 0$ can still be discretized
by a numerical method, called the macro solver.  A key idea in the HMM method is to provide the missing data in the macro model
($\tilde{d}$) using a local solution to the micro model. The micro model solution $u$ is computed locally on a small domain with
size proportional to the micro-scale. The initial data and boundary conditions ($d$) for this computation is constrained by the
macro-scale solution $\tilde{u}$.  

\subsection{HMM for the wave equation} \label{section:hmm_wave_equation}

We will formulate a general HMM framework for the wave equation on the domain $Y = [0,1]^d$.  
Let $u^{\varepsilon}$ be $Y$-periodic and solving,
\begin{equation}
	\begin{cases}
		u^{\varepsilon}_{tt} = \nabla \cdot A^{\varepsilon} \nabla u^{\varepsilon}, & Y \times \{ 0 \leq t \leq T \}, \\
		u^{\varepsilon} = f, \quad u^{\varepsilon}_t = g, & Y \times \{ t = 0 \}.
	\end{cases}
	\label{eq:hmm:wave}
\end{equation}
We follow the same strategy as in \cite{abdulle2003} for parabolic equations and in \cite{samaey2006} for the one-dimensional
advection equation. See also~\cite{engquist2007}.
We assume there exists a macro-scale PDE of the form
\begin{equation} 
	\begin{cases}
		u_{tt} - \nabla \cdot F(x, u, \nabla u, \dots) = 0, & Y \times \{ 0 \leq t \leq T \},  \\
		u = f, \quad u_t = g, & Y \times \{ t = 0 \}, \\
		u, & Y \text{-periodic}.
	\end{cases}
	\label{eq:hmm:macroform}
\end{equation}
where $F$ is a function of $x$, $u$ and higher derivatives of $u$.  The assumption on \eqref{eq:hmm:macroform} is that $u \approx
u^{\varepsilon}$ when $\varepsilon$ is small.  In the clean homogenization case we would have $F = \bar{A} \nabla u$, but we will
not assume knowledge of a homogenized equation. Instead we will solve the PDE \eqref{eq:hmm:wave}, only in a small time and space
box, and from that solution extract a value for $F$. The form of the initial data for this micro problem will be determined from the
local behavior of $u$. In the method we suppose that $F = F(x, \nabla u)$.  

\paragraph*{Step 1: Macro model discretization.} 
We discretize \eqref{eq:hmm:macroform} using central differences with time step $K$ and spatial grid size $H$ in all directions,
\begin{equation}
	\begin{cases}
		U^{n+1}_m = 2 U^n_m - U^{n-1}_m + \frac{K^2}{H} \left( F^{(1)}_{m+\frac{1}{2} e_1} - F^{(1)}_{m-\frac{1}{2} e_1} \right) + \dots + \frac{K^2}{H} \left( F^{(d)}_{m+\frac{1}{2} e_d} - F^{(d)}_{m-\frac{1}{2} e_d} \right), \\
		F^n_{m-\frac{1}{2} e_k} = F(x_{m-\frac{1}{2} e_k},P^n_{m-\frac{1}{2} e_k}), \quad k = 1,\dots,d, \quad \text{(Note: $F^n_{m-\frac{1}{2} e_k}$ is a vector.)} \\
	\end{cases}
	\label{eq:hmm:macroscheme}
\end{equation}
where $F^n_{m \pm 1/2 e_k}$ is $F$ evaluated at point $x_{m \pm 1/2 e_k}$.  The quantity $P^n_{m \pm \frac{1}{2} e_k}$ approximates
$\nabla u$ in the point $x_{m \pm 1/2}$. We show an example in Figure \ref{fig:htwodisc5} of the numerical scheme in two dimensions.
There $P^n_{m+\frac{1}{2} e_2}$ is given by the expression \eqref{eq:P_in_2d} in the Appendix.
\begin{figure}[tbp]
	\centering
	\includegraphics[width=0.67\linewidth]{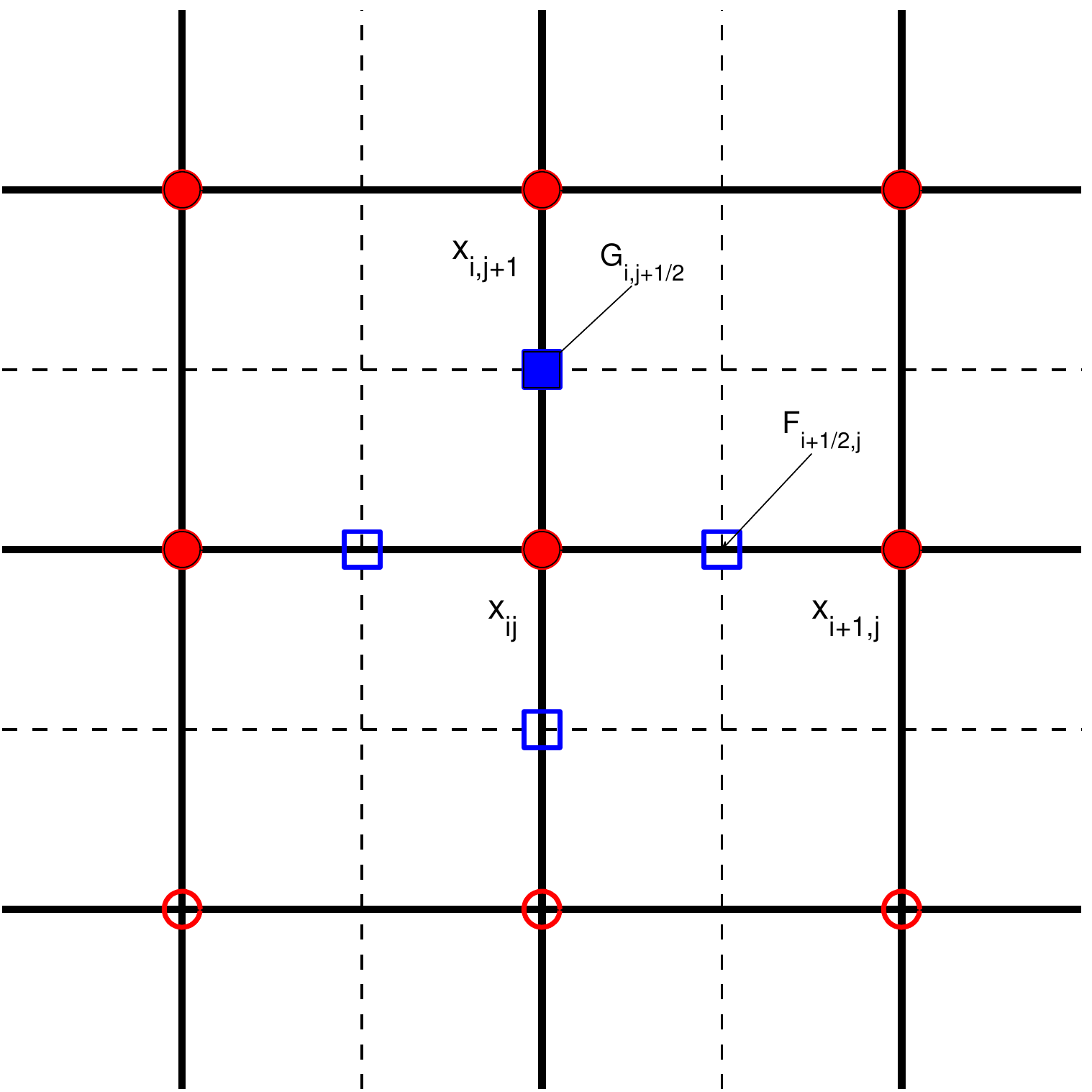}
	\label{fig:htwodisc5}
	\caption{The Numerical scheme \eqref{eq:P_in_2d} for $P$ in two dimensions. In the figure above the two components of $F$ in
	two different positions are given by $F_{i+1/2,j}$ and $G_{i,j+1/2}$. The $U$ points involved in computing
	$F^n_{m+\frac{1}{2}e_2}=G_{i,j+1/2}$ and $\nabla u \approx P^n_{m+\frac{1}{2}e_2}$ are indicated by filled circles.}
\end{figure}	
	

\paragraph*{Step 2: Micro problem.} 
The evaluation of $F^n_{m-\frac{1}{2} e_k}$ in each grid point is done by solving a micro problem to fill in the missing data in the
macro model.  Given the parameters $x_{m-\frac{1}{2} e_k}$ and $P^{n}_{m-\frac{1}{2} e_k}$, we solve a corresponding micro problem
over a small micro box $Y^{\varepsilon}$, centered around $x_{m-\frac{1}{2} e_k}$.
In order to simplify the notation, we make a change of variables $x - x_{m - \frac{1}{2} e_k} \mapsto x$.
This implies that $A^{\varepsilon}(x) \mapsto A^{\varepsilon}(x + x_{m - \frac{1}{2} e_k})$.
The micro problem has the form,
\begin{equation}
	\begin{cases}
		u^{\varepsilon}_{tt} - \nabla \cdot A^{\varepsilon} \nabla u^{\varepsilon} = 0, & Y^{\varepsilon} \times \{ 0 \leq t \leq \tau \}, \\
		u^{\varepsilon}(x,0) = (P^n_{m-\frac{1}{2} e_k}) \cdot x, \quad u^{\varepsilon}_t(x,0) = 0, & Y^{\varepsilon} \times \{ t = 0 \}, \\
		u^{\varepsilon} - u^{\varepsilon}(x,0), & \text{$Y^{\varepsilon}$-periodic.}
	\end{cases}
	\label{eq:hmm:microproblem}
\end{equation}
We keep the micro box size of order $\varepsilon$, i.e. $\tau$, $\diam Y^{\varepsilon} = \mathcal{O}(\varepsilon)$.  
We note that the solution $u^{\varepsilon}$ is an even function with respect to $t$ (i.e. $u^{\varepsilon}(x,-t) = u^{\varepsilon}(x,t)$)
due to the initial condition $u^{\varepsilon}_t(x,0) = 0$.


\paragraph*{Step 3: Reconstruction step.} 
After we have solved for for $u^{\varepsilon}$ for all $Y^{\varepsilon} \times [0, \tau]$ we approximate $F^n_{m-\frac{1}{2} e_k}
\approx \tilde{F}(x_{m-\frac{1}{2} e_k}, P^n_{m-\frac{1}{2} e_k})$.  The function $\tilde{F}$ is the mean value of $f^{\varepsilon}
= A^{\varepsilon} \nabla u^{\varepsilon}$ over $[-\eta,\eta]^d \times [-\tau,\tau]$ where $[-\eta,\eta]^d \subset Y^{\varepsilon}$.
The approximation can be improved with respect to the size of $\tau/\varepsilon$ and $\eta/\varepsilon$, by computing a weighted
average of $f^{\varepsilon}$.  We consider kernels $K$ described in \cite{engquist2005}: We let $\mathbb{K}^{p,q}$ denote the kernel
space of functions $K$ such that $K \in C^{q}_{c}(\mathbb{R})$ with $\supp \; K = [-1,1]$ and 
\begin{equation*}
	\int K(t) t^r \D{t} 
	= \begin{cases}
		1, & r = 0; \\
		0, & 1 \leq r \leq p.
	\end{cases}
	\label{eq:hmm:kernel}
\end{equation*}
Furthermore we will denote $K_{\eta}$ as a scaling of $K$
\begin{equation*}
	K_{\eta}(x) := \frac{1}{\eta}K\left(\frac{x}{\eta}\right), \qquad K \in \mathbb{K}^{p,q},
	\label{eq:hmm:kernelscaled}
\end{equation*}
with compact support in $[-\eta,\eta]$.
We use kernels of this sort to improve the approximation quality for the mean value computation,
\begin{equation}
	\tilde{F}(x_{m-\frac{1}{2} e_k}, P^n_{m-\frac{1}{2} e_k}) = \iint K_{\tau}(t) K_{\eta}(x) f^{\varepsilon}_k \D{x} \D{t}, \qquad f^{\varepsilon} = A^{\varepsilon}(x+x_{m+\frac{1}{2} e_k}) \nabla u^{\varepsilon},
	\label{eq:hmm:Ftildekernel}
\end{equation}
where here the multi variable kernel $K_{\eta}(x)$ is defined as
\begin{equation}
	K_{\eta}(x) = K_{\eta}(x_1) K_{\eta}(x_2) \cdots K_{\eta}(x_d),
\end{equation}
using the single valued kernel $K_{\eta}$, still denoted by $K_{\eta}$.  The domain $Y^{\varepsilon}$ is chosen such that
$[-\eta,\eta]^d \subset Y^{\varepsilon}$ and sufficiently large for information not to propagate into the region $[-\eta,\eta]^d$.
Typically we use 
\begin{equation}
	Y = [-y_{\text{max}},y_{\text{max}}]^d, \qquad y_{\text{max}} = \eta + \tau \sqrt{\sup \|A^{\varepsilon}\|_2},
	\label{eq:hmm:ysaftybox}
\end{equation}
c.f. discussion about micro solver boundary conditions in \cite{samaey2006}.
In this way we do not need to worry about the effects of boundary conditions.
Note therefore that other types of boundary conditions could also be used in \eqref{eq:hmm:microproblem}.

\begin{remark} \label{remark:kernexp}
	It is possible to find functions with infinite $q$.
	In \cite{engquist2005} a kernel $K_{\exp}$ is given, where $p=1$ and $q$ is infinite:
	\begin{equation*}
		K_{\exp}(x) = \begin{cases}
			C_0 \exp\left(\frac{5}{x^2-1}\right), & |x| < 1, \\
			0, & |x| \geq 1, 
		\end{cases}
	\end{equation*}
	where $C_0$ is chosen such that $\int K_{\text{exp}}(x) \D{x} = 1$.  
	This kernel is suitable for problems where $A^{\varepsilon}$ is of the form $A^{\varepsilon}(x) = A(x/\varepsilon)$.
\end{remark}

\begin{remark}
	The weighted integrals above are computed numerically with a simple trapezoidal rule.  
\end{remark}

\begin{remark}
	In our implementation, the micro problem \eqref{eq:hmm:microproblem} is solved with the same numerical scheme as the macro
	problem \eqref{eq:hmm:macroscheme}.
\end{remark}


\subsection{Computational cost} \label{section:hmm:complexity}

Let us assume that the time step is proportional to $\varepsilon$ in all direct solvers.  Using a direct
solver for \eqref{eq:hmm:wave} on the full domain implies a cost of order $\varepsilon^{-(d+1)}$.
The total cost for HMM is of the form $\left(\text{cost of micro problem}\right) \times M_d$ where $M_d$ is the number of micro
problems needed to be solved per macro time step.  The cost of a single micro problem is of the form $\left(\tau/\varepsilon\right)
\times \left(\eta/\varepsilon\right)^d$. We assume kernels with $\tau, \eta \sim \varepsilon$ and that $M_d$ does not depend on
$\varepsilon$. With these assumption our HMM method has a computational cost independent of $\varepsilon$.
The constant can, however, still be large. Fortunately the computational cost of the HMM process can be reduced significantly. We
observe that the function \eqref{eq:hmm:Ftildekernel} is linear in $p$. It is in fact composed of three linear operations:
\begin{enumerate}
	\item Compute initial data $u(x,0)$ and $u_t(x,0)$ from $p$,
		$u(x,0) = p \cdot x$. 
	\item Solve $u^{\varepsilon}_{tt} - \nabla \cdot A^{\varepsilon} \nabla u^{\varepsilon} =0$
		for $0 \leq t \leq \tau$.
	\item Compute average $\tilde{F} = \iint K_{\tau} K_{\eta} f^{\varepsilon} \D{x} \D{t}$
		where $f^{\varepsilon} = A^{\varepsilon} \nabla u^{\varepsilon}$
\end{enumerate}
The first operation is clearly a linear operation. 
In step two we compute a
solution to a linear PDE, therefore this step is linear as well.  Computing the integral average in step three is also a linear
operation.  

As a corollary we can apply the HMM process to a smaller number of micro problems and form linear combinations of those for any given
$\tilde{F}$ computation.  More precisely, after precomputing $F(x,e_i)$, $i=1,2,\dots,d$
we can compute $\tilde{F}$ for fixed $x \in \Omega$ and any $p \in \mathbb{R}^d$,
\begin{equation}
	\tilde{F}(x, p) = \sum_{i=1}^d p_i F(x, e_i),
	\label{eq:canonical_micro_problems}
\end{equation}
where $p_i$ is the $i$th coefficient in $p$ in the basis $e_1, e_2, \dots, e_d$.  In conclusion, by precomputing the micro problems
$F(x_m,e_i)$ in \eqref{eq:canonical_micro_problems} we only need to solve $d$ micro problems in each macro grid point $x_m = m H$.
There is no need to solve any micro problems again in the next macro time step. 
The complexity is as before $\mathcal{O}(1)$, but with a lower constant not depending on the number of time step.
\begin{remark}
	In fact, if $A^{\varepsilon}$ is $\varepsilon$-periodic and the macro grid is such that $x_m = r (\mod \varepsilon)$, where
	$r$ is constant and independent of $m$, we only need to solve $d$ micro problems in total. In this case,
	the total cost is independent of both $\varepsilon$ and the macro grid size $H$.
\end{remark}

\section{Convergence theory} \label{section:convtheory}

In this section we apply the HMM process to the problem \eqref{eq:introduction:wave} with $A^{\varepsilon}(x) = A(x/\varepsilon)$
where $A$ is a $Y$-periodic symmetric positive matrix and show that it generates results close to a direct discretization of the
homogenized equation \eqref{eq:introduction:wavebar}.  In particular we show that
\begin{equation}
	\tilde{F}(x,p) = F(x,p) + \mathcal{O}\left(\left(\frac{\varepsilon}{\eta}\right)^{q}\right).
\end{equation}
The function $\tilde{F}$ and $F$ are defined in \eqref{eq:hmm:Ftildekernel} and \eqref{eq:hmm:macroform} respectively and we note that
here $F(x,p) = \bar{A} p$.  The integer $q$ depends on the smoothness of the kernel used to compute the weighted average of
$f^{\varepsilon}$ in \eqref{eq:hmm:Ftildekernel}.

We will formulate the problem in the setting of elliptic operators.  For the analysis we solve the micro problem
\eqref{eq:hmm:microproblem} over all of $\mathbb{R}^d$
\begin{equation}
	\begin{cases}
		u^{\varepsilon}_{tt} - \nabla \cdot A^{\varepsilon} \nabla u^{\varepsilon} = 0, & \mathbb{R}^d \times \{ 0 \leq t \leq \tau \}, \\
		u^{\varepsilon} = p \cdot x, \quad u^{\varepsilon}_t = 0, & \mathbb{R}^d \times \{ t = 0 \}.
	\end{cases}
	\label{eq:conv-nd:wave}
\end{equation}
Note that this gives the same $\tilde{F}$  as in \eqref{eq:hmm:Ftildekernel} if we choose a sufficiently large box $Y^{\varepsilon}$.

\begin{thm} \label{thm:conv}

	Let $\tilde{F}(x_0,p)$ be defined by \eqref{eq:hmm:Ftildekernel} where $u^{\varepsilon}$ solves the micro problem
	\eqref{eq:conv-nd:wave}, $A^{\varepsilon}(x) = A(x/\varepsilon)$ and $A$ is $Y$-periodic and smooth.  Moreover
	suppose $K \in \mathbb{K}^{p',q}$, $f$ and $g$ is smooth and $\tau = \eta$.  Then for $p\neq{}0$,
	\begin{equation*}
		\frac{1}{p} \left| \tilde{F}(x_0,p) - F(x_0,p) \right| \leq C \left(\frac{\varepsilon}{\eta}\right)^q,
	\end{equation*}
	where $C$ is independent of $\varepsilon$, $\eta$, $p$ and $q$. Furthermore, for the numerical approximation given in
	\eqref{eq:hmm:macroscheme} in one dimension, with $H = n \varepsilon$ for some integer $n$ and smooth initial data, we have the error estimate 
	\begin{equation*}
		|U^n_m - \bar{u}(x_m,t_n)| \leq C(T) \left( H^2 + (\varepsilon/\eta)^q \right), \qquad 0 \leq t_n \leq T,
	\end{equation*} 
	where $\bar{u}$ is the homogenized solution to \eqref{eq:introduction:wavebar}.  

\end{thm}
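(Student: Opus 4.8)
The plan is to prove the two estimates in turn: the pointwise flux error by an exact spectral decomposition of the micro solution, and then the global error by combining that estimate with a standard stability argument for the leapfrog scheme. For the flux error I would first use that \eqref{eq:conv-nd:wave} is linear and homogeneous of degree one in $p$, so it suffices to bound $|\tilde F(x_0,p)-\bar A p|$ for $|p|=1$ and let the $1/p$ scaling absorb the rest. Writing $u^\varepsilon = p\cdot x + w^\varepsilon$ with $L^\varepsilon := -\nabla\cdot A^\varepsilon\nabla$, the remainder $w^\varepsilon$ has zero initial data and solves $w^\varepsilon_{tt}+L^\varepsilon w^\varepsilon = \nabla\cdot(A^\varepsilon p)$ with the time-independent source $S:=\nabla\cdot(A^\varepsilon p)$. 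Since $S$ is $\varepsilon$-periodic in $x$ with zero mean, $w^\varepsilon$ stays $\varepsilon$-periodic and mean-zero, so the whole problem lives on a single period cell, and Duhamel's formula with a constant source gives the clean splitting
\[
	w^\varepsilon(x,t) = (L^\varepsilon)^{-1}S - (L^\varepsilon)^{-1}\cos\!\big(t\sqrt{L^\varepsilon}\big)\,S,
\]
a steady part plus a purely time-oscillatory transient.

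The steady part $\psi:=(L^\varepsilon)^{-1}S$ solves $\nabla\cdot A^\varepsilon(\nabla\psi+p)=0$, i.e.\ exactly the corrector (cell) problem, so $\psi=\varepsilon\,\chi(x/\varepsilon)\cdot p$ with $\chi_j$ the standard cell solutions, and the corresponding flux $A^\varepsilon(p+\nabla\psi)$ is $\varepsilon$-periodic with cell mean $\bar A p$. Averaging this against $K_\eta$ and using the moment and smoothness properties of $K\in\mathbb{K}^{p',q}$ — equivalently the $|\xi|^{-q}$ decay of $\widehat K$ — removes the periodic fluctuation about $\bar A p$ up to $\mathcal{O}((\varepsilon/\eta)^q)$. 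For the transient I would expand $S$ in the eigenbasis of $L^\varepsilon$ on the period cell; because $S$ is mean-zero the only modes present have frequencies $\omega\gtrsim 1/\varepsilon$ (the smallest nonzero eigenvalue of $L^\varepsilon$ on a cell of size $\varepsilon$ is of order $\varepsilon^{-2}$). The time average then factors through $\int K_\tau(t)\cos(\omega t)\D{t}=\widehat K(\omega\tau)$, and with $\tau=\eta$ and $\omega\tau\gtrsim \eta/\varepsilon$ this is again $\mathcal{O}((\varepsilon/\eta)^q)$; smoothness of $A$ guarantees the mode amplitudes decay fast enough for the sum to be bounded uniformly. Adding the two contributions gives the first estimate.

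For the 1D global error I would exploit the periodic structure once more. Because $A^\varepsilon=A(x/\varepsilon)$ and $H=n\varepsilon$, every micro box sits at the same phase relative to the period, so the HMM flux is the \emph{same} affine map $p\mapsto\tilde A p$ at every grid face, with a constant matrix $\tilde A=\bar A+\mathcal{O}((\varepsilon/\eta)^q)$ by the first part. Hence \eqref{eq:hmm:macroscheme} is nothing but the ordinary second-order leapfrog scheme for the constant-coefficient equation $v_{tt}=\tilde A v_{xx}$. I would then split the error by the triangle inequality: first $|U^n_m-v(x_m,t_n)|\le C(T)H^2$ by the classical consistency-plus-CFL-stability estimate for leapfrog with smooth data, and second $|v(x_m,t_n)-\bar u(x_m,t_n)|\le C(T)\,|\tilde A-\bar A|\le C(T)(\varepsilon/\eta)^q$ by a continuous energy estimate for the difference of two constant-coefficient wave equations with identical data whose speeds differ by $\mathcal{O}((\varepsilon/\eta)^q)$. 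Summing the two bounds yields the claim.

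The main obstacle is the transient estimate in the flux bound: one must justify the spectral representation of the micro solution rigorously and, crucially, establish the frequency lower bound $\omega\gtrsim 1/\varepsilon$ together with uniform control of the mode amplitudes, so that the kernel's Fourier decay genuinely produces the rate $(\varepsilon/\eta)^q$ with a constant independent of $\varepsilon$, $\eta$ and $q$. By comparison the global step is routine, the only points requiring care being the constancy of $\tilde A$ across faces (which is exactly what $H=n\varepsilon$ buys) and checking that the leapfrog energy estimate tolerates the coefficient perturbation.
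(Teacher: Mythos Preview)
Your proposal is correct and follows essentially the same route as the paper: the spectral/Duhamel splitting $w^\varepsilon=(L^\varepsilon)^{-1}S-(L^\varepsilon)^{-1}\cos(t\sqrt{L^\varepsilon})S$ is exactly the paper's eigenfunction expansion on the unit cell (written in operator-calculus notation), the steady part is identified with the cell problem in both, and the transient is killed by the same kernel-averaging lemma applied to $\cos(t\sqrt{\lambda_j}/\varepsilon)$. The only minor deviation is in the 1D global step, where the paper compares the leapfrog solution and $\bar u$ via the explicit d'Alembert formula rather than your intermediate-solution-plus-energy-estimate split; both arguments are routine and yield the same bound.
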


\begin{proof}
We will prove the Theorem in the following steps:
\begin{enumerate}
	\item Reformulate the problem as a PDE for a periodic function.
	\item Define an elliptic operator $L(y)$.
	\item Expand $\nabla_y \cdot A(y)$ and $v(y,t)$ (to be defined) in eigenfunctions to $L(y)$.
	\item Compute time dependent $v_j(t)$ coefficients in the above eigenfunction expansion.
	\item Compute the integral of $f^{\varepsilon}$ to get $\hat{F}$.
	\item Compute the solution to a cell problem and give final estimate.
\end{enumerate}

\paragraph*{Step 1:}
Express the solution to \eqref{eq:conv-nd:wave} as 
\begin{equation}
	u^{\varepsilon}(t,x) = p \cdot x + v(x/\varepsilon,t).
	\label{eq:conv-nd:umapstov}
\end{equation}
We insert this into \eqref{eq:conv-nd:wave} to get a PDE for $v$ 
\begin{equation}
	\begin{cases}
		v_{tt} = \frac{1}{\varepsilon} \nabla_y \cdot A(y) p + \frac{1}{\varepsilon^2} \nabla_y \cdot A(y) \nabla_y v(y), \\
		v(x,0) = 0, \quad v_t(x,0) = 0, \\
	\end{cases}
	\label{eq:conv-nd:vpde}
\end{equation}
where $y = x/\varepsilon$. Since $A$ is $Y$-periodic, so is $v$, and we can solve \eqref{eq:conv-nd:vpde} as a $Y$-periodic problem.

\paragraph*{Step 2:}
We define the linear operator $L(y) := -\nabla_y \cdot A(y) \nabla_y$ on $Y$ with periodic boundary conditions.  Denote by $w_j(y)$
the eigenfunctions and $\lambda_j$ the corresponding (non-negative) eigenvalue of $L$.  Since $L$ is uniformly elliptic, standard
theory on periodic elliptic operators informs us that all eigenvalues are strictly positive, bounded away from zero, except for the
single zero eigenvalue \cite{krein1948}
\begin{equation}
	0 = \lambda_0 < \lambda_1 < \lambda_2 \leq \cdots
\end{equation}
and $w_j \in C^{\infty}$ forms an orthonormal basis for $L^2_{\text{per}}(Y)$. 
Note also that $w_0 = |Y|^{-1}$ is a constant function.

\paragraph*{Step 3:}
We express $\nabla_y \cdot A(y)$ and $v(y)$ in eigenfunctions of $L$:
\begin{equation}
	\nabla_y \cdot A(y) = \sum_{j=1}^{\infty} a_j w_j(y) \quad \text{and} \quad v(y,t) = \sum_{j=0}^{\infty} v_j(t) w_j(y).
	\label{eq:conv-nd:eexp}
\end{equation}
Note that here $a_j$ are column vectors and as in the one dimensional case we have that $a_0 = 0$ since the mean value of $\nabla_y
\cdot A(y)$ is zero,
\begin{equation}
	a_0 = \int_{Y} \nabla_y \cdot A(y) w_0(y) \D{y} = \frac{1}{|Y|} \int_{Y} \nabla_y \cdot A(y) \D{y} = 0.
\end{equation}

\paragraph*{Step 4:}
We plug the eigenfunction expansions \eqref{eq:conv-nd:eexp} into \eqref{eq:conv-nd:vpde} and find that
\begin{align}
	\sum_{j=0}^{\infty} v''_j w_j = \frac{1}{\varepsilon} p \cdot \sum_{j=1}^{\infty} a_j w_j - \frac{1}{\varepsilon^2} \sum_{j=1}^{\infty} L v_j w_j = \sum_{j=1}^{\infty} \frac{p \cdot a_j}{\varepsilon} w_j - \sum_{j=1}^{\infty} \frac{\lambda_j}{\varepsilon^2} v_j w_j.
\end{align}
By collecting terms of $w_j$ we get
\begin{equation}
	v''_j + \frac{\lambda_j}{\varepsilon^2} v_j = \frac{p \cdot a_j}{\varepsilon}.
\end{equation}
This is a system of ODE:s similar to the form,
\begin{equation}
	y'' + \alpha y = \beta,
\end{equation}
which has the solution of the form ($\alpha>0$)
\begin{equation}
	y(t) = A e^{i t \sqrt{\alpha}} + B e^{-i t \sqrt{\alpha}} + \frac{\beta}{\alpha}.
\end{equation}
Note that all $\lambda_j>0$ ($j>0$) so it is known that the $v_j$ functions in the
problem have the form,
\begin{equation}
	v_j(t) = A_j e^{\frac{i t \sqrt{\lambda_j}}{\varepsilon}} + B_j e^{\frac{-i t \sqrt{\lambda_j}}{\varepsilon}} + r_j, \qquad r_j = \frac{\varepsilon p \cdot a_j}{\lambda_j},
	\label{eq:conv-nd:vjgen}
\end{equation}
and the special $v_0$ is given by
\begin{equation}
	v_0(t) = \frac{p \cdot a_0}{2 \varepsilon} t^2 + C t + D = C t + D \quad \text{since $a_0=0$}.
\end{equation}
By plugging the general solution \eqref{eq:conv-nd:vjgen} into the initial conditions of \eqref{eq:conv-nd:vpde}, we can formulate equations for $A_j$ and $B_j$ ($j>0$),
\begin{align}
	& v(0,x) = 0 \Rightarrow \sum_{j=0}^{\infty} v_j (0) w^{\varepsilon}_j(x) = 0 \Rightarrow v_j(0) = 0 \Rightarrow A_j + B_j + r_j = 0; \label{eq:conv-nd:vj1} \\
	& v_t(0,x) = 0 \Rightarrow \sum_{j=0}^{\infty} v'_j(0) w^{\varepsilon}_j(x) = 0 \Rightarrow v_j'(0) = 0 \Rightarrow \frac{i \sqrt{\lambda_j}}{\varepsilon} A_j - \frac{i \sqrt{\lambda_j}}{\varepsilon} B_j = 0.
	\label{eq:conv-nd:vj2}	
\end{align}
Similary, for $v_0(t)$
\begin{equation}
	v_0(0) = 0 \Rightarrow C = 0, \quad \text{and} \quad v'_0(0) = 0 \Rightarrow D = 0,
\end{equation}
thus $v_0(t) \equiv 0$. We solve for $A_j$ and $B_j$ and get
\begin{equation}
	A_j = B_j = \frac{r_j}{2} = -\frac{\varepsilon p \cdot a_j}{2 \lambda_j}, \qquad j = 1,2,\ldots
\end{equation}
All in all, the $v_j(t)$ coefficients in explicit form are
\begin{equation}
	\begin{cases}
		v_0(t) &= 0, \\
		v_j(t) &= -\frac{\varepsilon p \cdot a_j}{2 \lambda_j} \left( e^{\frac{i t \sqrt{\lambda_j}}{\varepsilon}} + e^{\frac{-i t \sqrt{\lambda_j}}{\varepsilon}}\right) + \frac{\varepsilon p \cdot a_j}{\lambda_j} 
		= \frac{\varepsilon p \cdot a_j}{\lambda_j} \left( 1 - \cos \frac{t \sqrt{\lambda_j}}{\varepsilon}\right) \qquad j=1,2,\ldots 
	\end{cases}
\end{equation}
The solution to our problem \eqref{eq:conv-nd:vpde} can then be expressed as
\begin{equation}
	v(y,t) = \varepsilon p \cdot \sum_{j=1}^{\infty} \frac{a_j}{\lambda_j} \left(1 - \cos \frac{t \sqrt{\lambda_j}}{\varepsilon}\right) w_j(y).
	\label{eq:conv-nd:ueps_eigen}
\end{equation}

\paragraph*{Step 5:} Now plug the expression \eqref{eq:conv-nd:ueps_eigen} into the expression \eqref{eq:conv-nd:umapstov} 
\begin{gather}
	\begin{split}
		f^{\varepsilon}= \nabla \cdot u^{\varepsilon} A(x/\varepsilon) = p \cdot \left(1+\sum_{j=1}^{\infty} \frac{a_j}{\lambda_j} \left(1-\cos \frac{t \sqrt{\lambda_j}}{\varepsilon}\right) \nabla_y \cdot w_j(x/\varepsilon) \right) A(x/\varepsilon).
	\end{split}
\end{gather}
We write down and analyze the function $f^{\varepsilon}$ in two parts $f^{\varepsilon} = p \cdot (\Lambda_1 + \Lambda_2)$, where
\begin{equation}
	\begin{cases}
		\Lambda_1(x/\varepsilon)   =   \left( I + \sum_{j=1}^{\infty} \frac{a_j}{\lambda_j} \nabla_y \cdot w_j(x/\varepsilon) \right) A(x/\varepsilon) , \\
		\Lambda_2(x/\varepsilon,t) = - \sum_{j=1}^{\infty} \frac{a_j}{\lambda_j} \cos \frac{t \sqrt{\lambda_j}}{\varepsilon} \nabla_y \cdot w_j(x/\varepsilon) A(x/\varepsilon).
	\end{cases}
\end{equation}

\paragraph*{Step 6a:}
First we show that $\Lambda_1 = \bar{A}$. 
To do that we need to use the so-called cell problem (or corrector problem, see Section~4.5.4 in \cite{evans1998}),
\begin{equation}
	\begin{cases}
		L(y) \chi = - \nabla_y \cdot A, & Y, \\
		\chi & \text{$Y$-periodic}.
	\end{cases}
	\label{eq:conv:cellproblem}
\end{equation}
We rewrite the cell problem \eqref{eq:conv:cellproblem} using a eigenfunctions expansion
\begin{equation}
	\sum_{j=0}^{\infty} \lambda_j\chi_j w_j = - \sum_{j=1}^{\infty} a_j w_j \Rightarrow \chi_j = - \frac{a_j}{\lambda_j}, \qquad j=1,2,\ldots,
\end{equation}
where $\chi_j$ are column vectors with coefficients of $\chi$ in the eigenfunctions expansion.
For the other term $\Lambda_1$ we now will make good use of the eigenfunction expansion of the cell solution $\chi$,
\begin{gather}
	\begin{split}
		\iint K_{\tau}(t) K_{\eta}(x) \Lambda_1 \D{x} \D{t}
		& = \int K_{\eta}(x) \left( I + \sum_{j=1}^{\infty} \frac{a_j}{\lambda_j} \nabla_y w_j(x/\varepsilon) \right) A(x/\varepsilon) \D{x} \\
		& = \int K_{\eta}(x) \left( I - \nabla_y \chi(x/\varepsilon) \right) A(x/\varepsilon) \D{x} \\
		& = \int K_{\eta}(x) \left( A(x/\varepsilon) - \nabla_y \chi(x/\varepsilon) A(x/\varepsilon) \right) \D{x} \\
		& = \int K_{\eta}(x) \left( A(x/\varepsilon) - A(x/\varepsilon) \nabla_y \chi(x/\varepsilon) \right) \D{x} \\		
		& = \bar{A} + \mathcal{O}\left(\left(\frac{\varepsilon}{\eta}\right)^q\right),
	\end{split}
	\label{eq:conv-nd:xavg}
\end{gather}
where we used Lemma \ref{lem:kernel}, in each coordinate direction.

\paragraph*{Step 6b:}
Now we should show that
\begin{equation}
	\iint K_{\tau}(t) K_{\eta}(x-x_0) \Lambda_2 \D{t} \rightarrow 0, \qquad \frac{\tau}{\varepsilon} \rightarrow \infty.
\end{equation}
For that we need a Lemma from \cite{engquist2005}:
\begin{lem}
	Let $f^{\varepsilon}(t) = f(t,t/\varepsilon)$, where $f(t,s)$ is $1$-periodic in the 
	second variable	and $\partial^r f(t,s) / \partial t^r$ is continuous for 
	$r = 0, 1, \ldots, p-1$. For any $K \in \mathbb{K}^{p,q}$ there exists constants 
	$C_1$ and $C_2$, independent of $\varepsilon$ and $\eta$, such that
	\begin{equation*}
		E = |K_{\eta} \ast f^{\varepsilon}(t) - \bar{f}(t)| \leq C_1 \eta^p + C_2 \left(\frac{\varepsilon}{\eta}\right)^q,
		\qquad \bar{f}(t) = \int_0^1 f(t,s) \D{s}.
	\end{equation*}
	If $f=f(t/\varepsilon)$ then we can take $C_1=0$.
	Furthermore, the error is minimized if $\eta$ is chosen to scale with $\varepsilon^{q/(p+q)}$.
	\label{lem:kernel}
\end{lem}
We now apply Lemma \ref{lem:kernel} to obtain
\begin{equation}
	\left| \int K_{\tau}(t) \cos \frac{t \sqrt{\lambda_j}}{\varepsilon} \D{t} \right| \leq 
	C_2 \left(\frac{2 \pi \varepsilon}{\sqrt{\lambda_j} \tau}\right)^q = C' \frac{1}{\lambda_j^{q/2}}  \left(\frac{\varepsilon}{\tau}\right)^q
	\label{eq:conv-nd:bjbound}
\end{equation}
%
%
Let $b_j$ and the column vector $g(y)$ be defined as
\begin{equation}
	b_j = \int K_{\tau}(t) \cos \frac{t \sqrt{\lambda_j}}{\varepsilon} \D{t}, \quad g(y) = \sum_{j=1}^{\infty} b_j \chi_j w_j(y),
\end{equation}
where we again used the solution to the cell problem \eqref{eq:conv:cellproblem} in the formulation of $g(y)$.
We then express $\iint K_{\tau} K_{\eta} \Lambda_2 \D{x} \D{t}$ using $g$, followed by a change of variables:
\begin{align*}
	\iint K_{\tau}(t) K_{\eta}(x-x_0) \Lambda_2 \D{x} \D{t}
	&= - \int K_{\eta} \nabla_y \cdot g(x/\varepsilon) A(x/\varepsilon) \D{x}  \\
	&= - \int K(x) \nabla_y \cdot \left(\frac{\eta x + x_0}{\varepsilon}\right) A\left(\frac{\eta x + x_0}{\varepsilon}\right) \D{x} \\
	&= - \int \frac{\varepsilon}{\eta} K(x) A\left(\frac{\eta x + x_0}{\varepsilon}\right) \nabla \cdot g\left(\frac{\eta x + x_0}{\varepsilon}\right) \D{x}.
\end{align*}
By doing integration by parts, using $K(e_k)=K(-e_k)=0$ ($k=1,2,\ldots,d$), together with Cauchy-Schwartz inequality, we obtain
\begin{multline}
	\int\limits_{[-1,1]^d} \underbrace{\nabla \cdot \left( \frac{\varepsilon}{\eta} K(x) A\left(\frac{\eta x + x_0}{\varepsilon}\right) \right)}_{\mathcal{O}(1) \text{ column vector}} g\left(\frac{\eta x + x_0}{\varepsilon}\right) \D{x} \\
	\leq \left( \int\limits_{[-1,1]^d} \left( \nabla \cdot \left( \frac{\varepsilon}{\eta} K(x) A\left(\frac{\eta x + x_0}{\varepsilon}\right) \right) \right)^2 \D{x}
	            \int\limits_{[-1,1]^d} g^2\left(\frac{\eta x + x_0}{\varepsilon}\right) \D{x}
	             \right)^{1/2}
\end{multline}
which is bounded by $C \|g\|_{(L^2_{\text{per}})^d}$ where $C$ is independent of $\varepsilon$, and $\eta$.
Finally we need to show that $\|g\| \rightarrow 0$. This is done by observing that
\begin{equation}
	\|g\|^2_{(L^2_{\text{per}})^d} = \sum_{j=1}^{\infty} b_j^2 \chi_j^2 \leq b^2_{\max} \sum_{j=1}^{\infty} \chi_j^2 = b^2_{\max} \|\chi\|^2_{(L^2_{\text{per}})^d},    
\end{equation}
where $|b_{\max}|$ is bounded by,
\begin{equation}
	\frac{C'}{\lambda_1^{q/2}}  \left(\frac{\varepsilon}{\tau}\right)^q,
\end{equation}
following the computations in \eqref{eq:conv-nd:bjbound}.
Then finally, we add our results from the calculations above and get,
\begin{gather}
	\begin{split}
		\hat{F}(x_0,p) &= p \cdot \iint K_{\tau}(t) K_{\eta}(x-x_0) f^{\varepsilon} \D{x} \D{t} \\
		& = p \cdot \iint K_{\tau}(t) K_{\eta}(x-x_0) ( \Lambda_1(t) + \Lambda_2(x/\varepsilon,t) ) \D{x} \D{t} \\
		& = p \cdot \left( \bar{A} + \mathcal{O}\left(\left(\frac{\varepsilon}{\eta}\right)^q\right) \right).
	\end{split}
	\label{eq:conv:result}
\end{gather}
This proves the Theorem.

\paragraph*{Final step:} 
Now we show the error estimate $|U^n_m - \bar{u}(x_m,t_n)| \leq C(T)(H^2 + (\varepsilon/\eta)^q)$.  We observe that $\tilde{F}$ in
the Theorem is of the form
\begin{equation}
	\tilde{F} = \tilde{A}(x) p
\end{equation}
where $\tilde{A}$ is $\varepsilon$-periodic.  By \eqref{eq:conv:result},
\begin{equation}
	|\tilde{A}(x) - \bar{A}| \leq C \left(\frac{\varepsilon}{\eta}\right)^q.
\end{equation}
By choosing $H=n \varepsilon$ for some integer $n$, we find that the macro scheme \eqref{eq:hmm:macroscheme} is a standard second
order discretization of the problem
\begin{equation}
	\begin{cases}
		u_{tt} - \tilde{A}(0) u_{xx} = 0, & \Omega \times \{ 0 \leq t \leq \tau \}, \\
		u(0,x) = f(x), \quad u_t = g,     & \Omega \times \{ t=0 \},
	\end{cases}
\end{equation}
since $\tilde{A}(x_m)$ = $\tilde{A}(m n \varepsilon)$ = $\tilde{A}(0)$ for all $m$.  Hence, if $g=0$ (the result is true also for $g
\neq 0$),
\begin{equation}
	u^n_m = \frac{1}{2} \left( f(x_m - \sqrt{\tilde{A}(0)} t_n) + f(x_m + \sqrt{\tilde{A}(0)} t_n) \right) + \mathcal{O}(H^2).
\end{equation}
On the other hand, the solution of the homogenized \eqref{eq:introduction:wavebar} with $g=0$ is
\begin{equation}
	\bar{u}(x_m,t_n) = \frac{1}{2} \left( f(x_m - \sqrt{\bar{A}} t_n) + f(x_m + \sqrt{\bar{A}} t_n) \right).
\end{equation}
Therefore we get the error estimate
\begin{align}
	|U^n_m - \bar{u}(x_m,t_n)| &\leq \sup_{|t| \leq T} \left| f(x+\sqrt{\tilde{A}(0)} t) - f(x+\sqrt{\bar{A}} t) \right| + C(T) H^2 \\
		&\leq |f'|_{\infty} T |\sqrt{\tilde{A}} - \sqrt{\bar{A}} | + C(T) H^2 \\
		&\leq C(T) \left( H^2 + (\varepsilon/\eta)^q \right),
\end{align}
for $0 \leq t_n \leq T$.
This proves the Theorem.
\end{proof}

\section{Numerical results} \label{section:results}

In this section we show numerical results when applying the HMM process to various problems in one, two and three dimensions.  The
notation in the experiments in the $d$-dimensional setting ($d=1,2,3$) is the following: We let $Y = [0,1]^d$ denote the macro domain
and $\varepsilon$ be the micro problem scale. We denote by $H$ and $K$ the macro grid size and time step respectively and for the
micro-scale we denote by $h$ and $k$ the grid size and time step respectively. We use explicit second order accurate finite
difference schemes (see Appendix).

\subsection{Convergence study of different kernels} \label{section:results:kernels}

In Figure~\ref{fig:kernconv} and Figure~\ref{fig:kernconv_varying} we present convergence results for the flux $F$ in terms of
$\eta/\varepsilon$. We use different type of kernels for the problem \eqref{eq:nr_1d_pde} with $A^{\varepsilon}(x) =
A_1(x/\varepsilon)$ and $A^{\varepsilon}(x) = A_2(x,x/\varepsilon)$ where $A_1(y) = 1.1 + \sin (2 \pi y)$ and $A_2(x,y) = 1.1 +
\frac{1}{2}(\sin 2 \pi x + \sin 2 \pi y)$. We compare our numerical results to the theoretical bounds in Theorem \ref{thm:conv}.  On
problems with both fast and slow scales which is not directly covered by Theorem~\ref{thm:conv}, we see a (slow) growth of the error
as $\tau, \eta \rightarrow \infty$ consistent with the general approximation result in Lemma~\ref{lem:kernel}.  We plot
$(\varepsilon/\eta)^{q}$ and $\eta^{p}$ separate with dashed lines. 

\begin{figure}[tbp]
	\centering
	\includegraphics{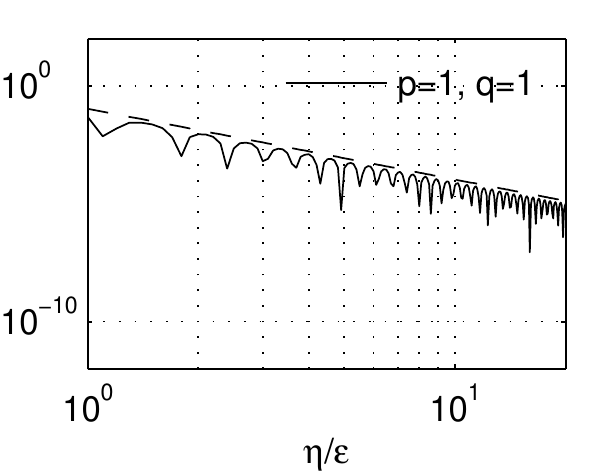}
	\includegraphics{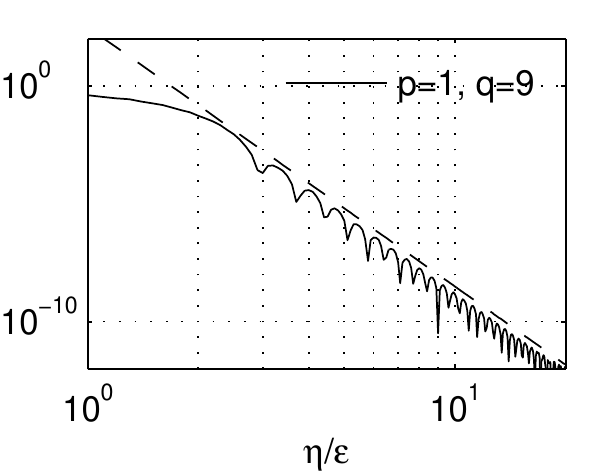}
	\includegraphics{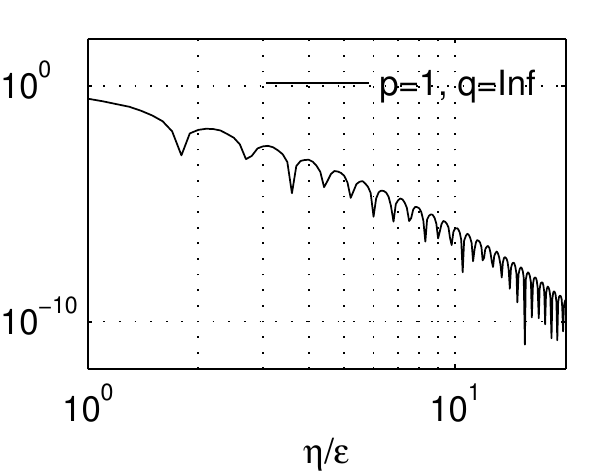}
	\caption{Convergence results, error $|\tilde{F} - \bar{F}|$ plotted against $\eta/\varepsilon$ ($\tau=\eta$) for fixed
	$\varepsilon = 0.01$ and where $A^{\varepsilon} = A_1$ with only fast scales. The dashed line corresponds to the
	$(\varepsilon/\eta)^{q}$ term in Theorem \ref{thm:conv}. The bottom figure shows results for the exponential kernel (see
	Remark \ref{remark:kernexp}) and indicates super algebraic convergence rate.}
	\label{fig:kernconv}
\end{figure}

\begin{figure}[tbp]
	\begin{center}
		\includegraphics{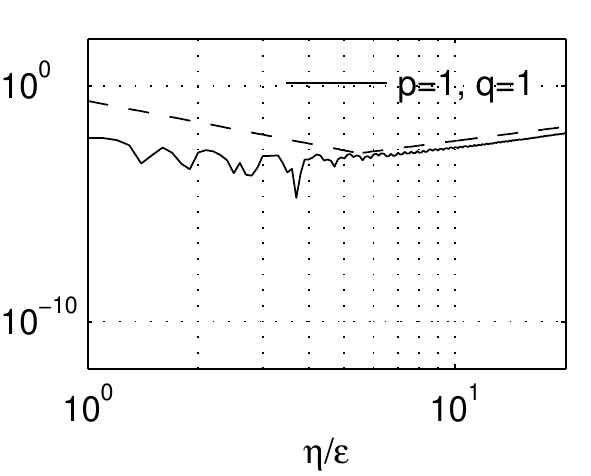}
		\includegraphics{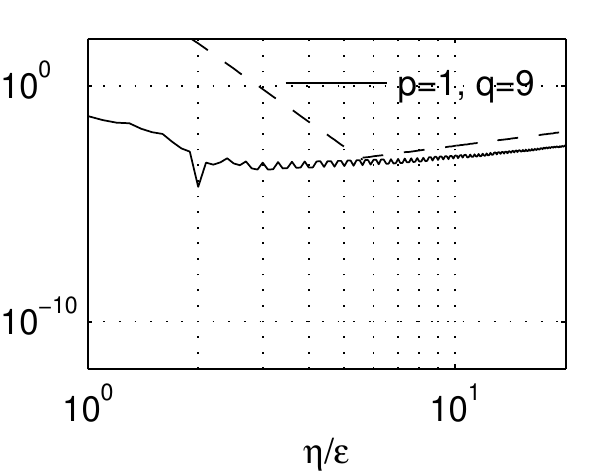}
		\includegraphics{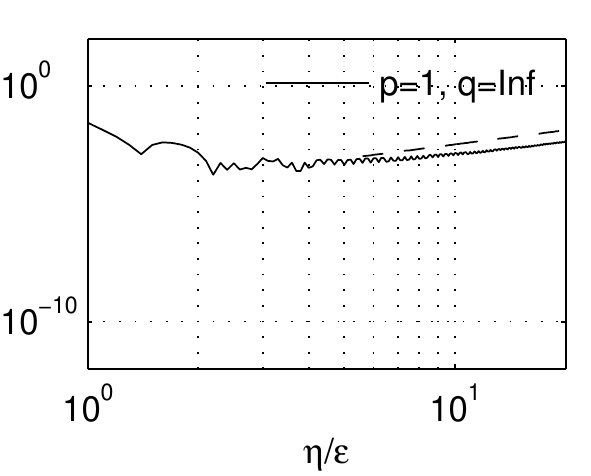}
	\end{center}
	\caption{Convergence results, error $|\tilde{F} - \bar{F}|$ plotted against $\eta/\varepsilon$ ($\tau=\eta$) for fixed
	$\varepsilon = 0.01$ and where $A^{\varepsilon} = A_2$ with both fast and slow scales. The dashed line with negative slope
	corresponds to the theoretical bound from the first term in Lemma 1 and the dashed line with positive slope corresponds to
	the $\eta^p$ term.}
	\label{fig:kernconv_varying}
\end{figure}

\subsection{1D results}

The general form for the one-dimensional examples is:
\begin{equation}
	\begin{cases}	
		u^{\varepsilon}_{tt} = \partial_x A^{\varepsilon} u^{\varepsilon}_x, & Y \times \{ 0 \leq t \leq T \}, \\
		u^{\varepsilon} = f, \quad u^{\varepsilon}_t = 0, & Y \times \{ t = 0 \},
	\end{cases}
	\label{eq:nr_1d_pde}
\end{equation}
where $Y = [0,1]$. We show some dynamics in Figure \ref{fig:nr_1d_dynamics} where we solved \eqref{eq:nr_1d_pde} for the
$A^{\varepsilon}$ and $f$ given in example one below.
\begin{figure}[tbp]
	\centering
	\includegraphics{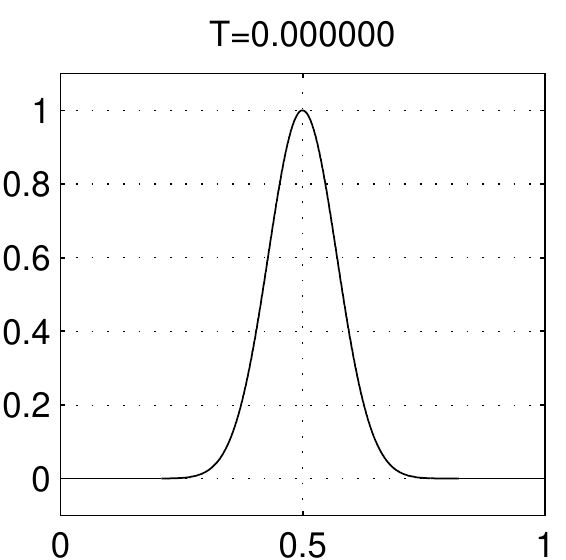}
	\includegraphics{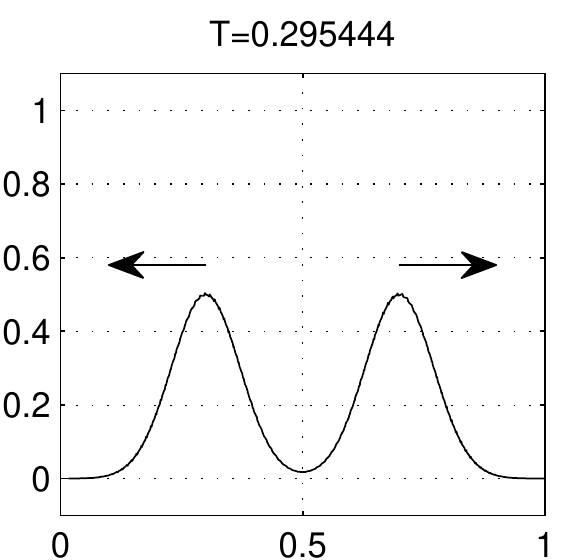}
	\includegraphics{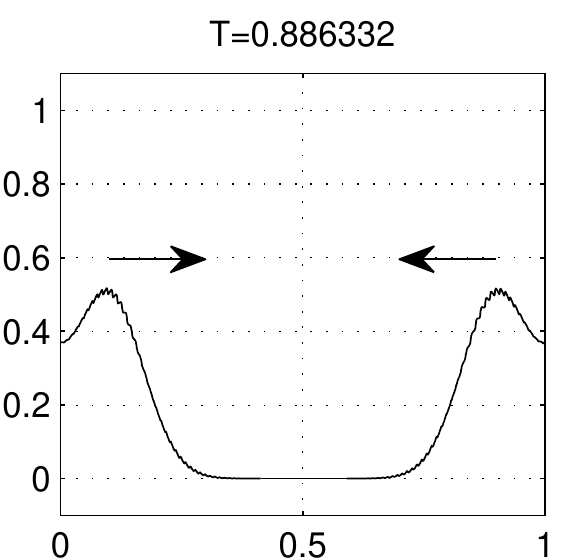}
	\includegraphics{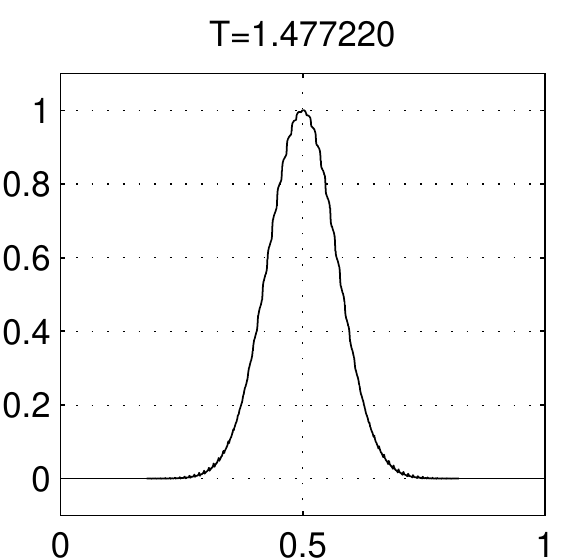}
	\caption{The dynamics of the problem \eqref{eq:nr_1d_pde} for 4 snapshots at $t_i = 3/(i \sqrt{\bar{A}}), 0 \leq i \leq 3$.
		Note the small oscillations superimposed on the smooth profile.
		We observe how the initial pulse separates in one left going and one right going pulse. 
		The effect of the periodic boundary condition can seen as the waves pass each other at the boundaries between frames 2 and 3.}
	\label{fig:nr_1d_dynamics}
\end{figure}
The homogenized solution to \eqref{eq:nr_1d_pde} will be of the form
\begin{equation}
	\begin{cases}
		\bar{u}_{tt} = \partial_x \bar{A} \bar{u}_x, & Y \times \{ 0 \leq t \leq T \}, \\
		\bar{u} = f, \quad \bar{u}_t = 0, & Y \times \{ t = 0 \},
	\end{cases}
	\label{eq:nr_1d_hom_pde}
\end{equation}
where $\bar{A}$ is given by the \emph{harmonic average} of $A(x,y)$ over one $Y$-period,
\begin{equation}
	\bar{A}(x) = \int_{0}^{1} \frac{\D{y}}{A(x,y)},
	\label{eq:harmonicavg}
\end{equation}
and $x$ being held fixed. 

\subsubsection{Example one} \label{section:example-one}

The first wave propagation problem we choose $A^{\varepsilon}$ and $f$ as
\begin{equation}
	\begin{cases}
		A^{\varepsilon}(x) = A(x/\varepsilon), \qquad A(y) = 1.1 + \sin 2 \pi y, \\
		f(x) = \exp(-(x-x_0)^2/\sigma^2), \qquad x_0 = 0.5, \quad \sigma = 0.1.
	\end{cases}
	\label{eq:nr_1d_a_func}
\end{equation}
We can compute $\bar{A}$ from \eqref{eq:harmonicavg} with techniques from complex analysis
\begin{equation}
	\bar{A} = \sqrt{\frac{21}{100}} = 0.458257569495584\ldots
\end{equation}
We will solve \eqref{eq:nr_1d_hom_pde} with a fully resolved discretization or direct numerical simulation (DNS), discretized
homogenized solution (HOM) and our HMM method (HMM).  We have used $\varepsilon = 0.01$, $\eta = 10 \varepsilon$.  In
Figure~\ref{fig:1d_superimposed} we show a snapshot of the solutions these methods after time $T=1$.  We use a kernel (same in both
time and space) $K \in \mathbb{K}^{5,6}$, that is $K$ has 5 zero moments and is 6 times continuously differentiable. 
\begin{figure}[tbp]
	\centering
	\includegraphics{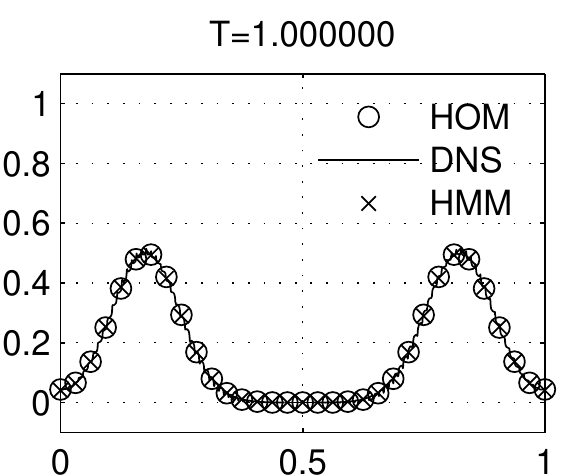}
	\includegraphics{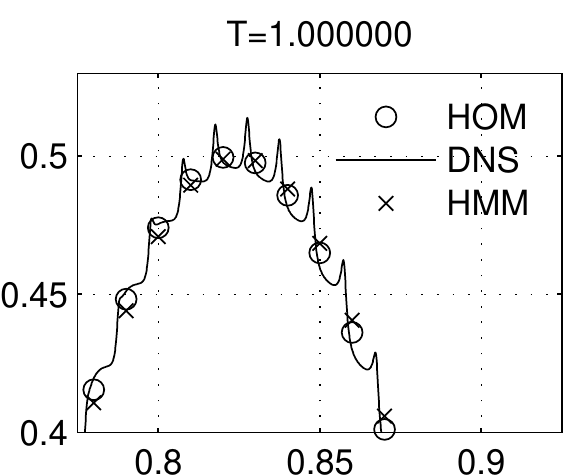}
	\caption{A snapshot of two super imposed solutions to \eqref{eq:nr_1d_pde} together with a zoomed section.}
	\label{fig:1d_superimposed}
\end{figure}

\subsubsection{Example two} \label{section:example-two}

We now consider a variation of \eqref{eq:nr_1d_pde} where $A^{\varepsilon}$ is defined as
\begin{equation}
	A^{\varepsilon}(x) = A(x,x/\varepsilon), \qquad A(x,y) = 1.1 + \frac{1}{2} \left( \cos 2 \pi x + \sin 2 \pi y \right).
	\label{eq:nr_1d_multiscale_pde}
\end{equation}
The homogenized operator $\bar{A}$ will not be constant but a function with explicit $x$ dependence.
We compute analytically $\bar{A}(x)$ to be
\begin{equation}
	\bar{A}(x) = \sqrt{\alpha(x)^2 - \beta^2} \qquad \alpha(x) = 1.1 + \frac{1}{2} \cos 2 \pi x, \quad \beta = \frac{1}{2}.
\end{equation}
For this experiment we use $\varepsilon = 0.01$, $K = 2 H$, $H = 3.33 \cdot 10^{-3}$.
For the micro problem we use $k/h = 0.5$ and $h = \varepsilon/64$. The kernel from $\mathbb{K}^{9,9}$.
The small $H$ is to lessen the effect of the numerical dispersion.
We show results from $T=1$ in Figure~\ref{fig:1d_superimposed_varying}. 

\begin{figure}[tbp]
	\centering
	\includegraphics{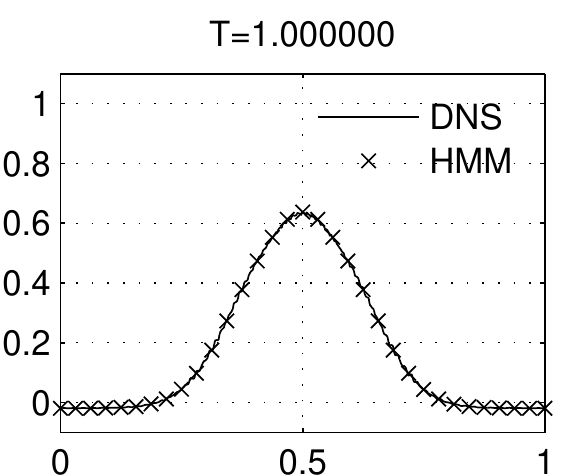}
	\includegraphics{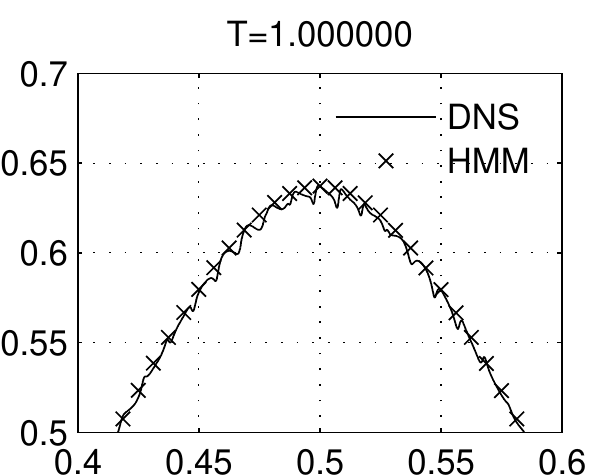}
	\caption{A snapshot of two super imposed solutions to \eqref{eq:nr_1d_multiscale_pde} together with a zoomed section.}
	\label{fig:1d_superimposed_varying}
\end{figure}

\subsubsection{Example three} \label{section:example-three}

In the last one-dimensional example the macro equation is unknown, i.e. homogenization does not provide $\bar{A}$. We
define $A^{\varepsilon}$ as a sum of many micro-scale oscillations
\begin{equation}
	\left\{
	\begin{aligned}
		& A^{\varepsilon}(x) = 1.1 + \frac{1}{5} \sum_{i=1}^{5} \sin 2 \pi \frac{x}{\varepsilon_i}, & \varepsilon_i = \frac{1}{90 + 5 (i-1)}, \\
		& f(x) = \exp(-(x-x_0)^2/\sigma^2), & x_0 = 0.5, \quad \sigma = 0.1.
	\end{aligned}
	\right.
\end{equation}
A plot of $A^{\varepsilon}$ is shown in Figure \ref{fig:1d_superimposed_varying2}.
The numerical parameters for the macro-solver (HMM and homogenized) are $H = 3.33 \cdot 10^{-3}$, $K = 0.5 H$.
The micro solver uses $\tau = 10 \varepsilon_3$, $\eta = \varepsilon_3$, $h = \varepsilon_3/64$ and $k = 0.5 h$.
The kernel $K$ used, for both time and space, is $K \in \mathbb{K}^{5,6}$. 
The results are shown in Figure \ref{fig:1d_superimposed_varying2}.

\begin{figure}[tbp]
	\centering
	\includegraphics{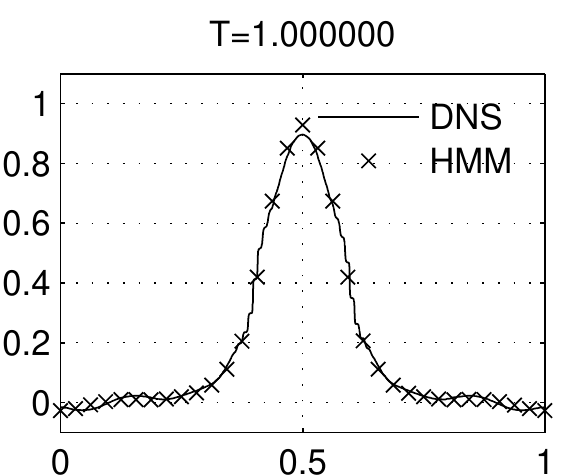}
	\includegraphics{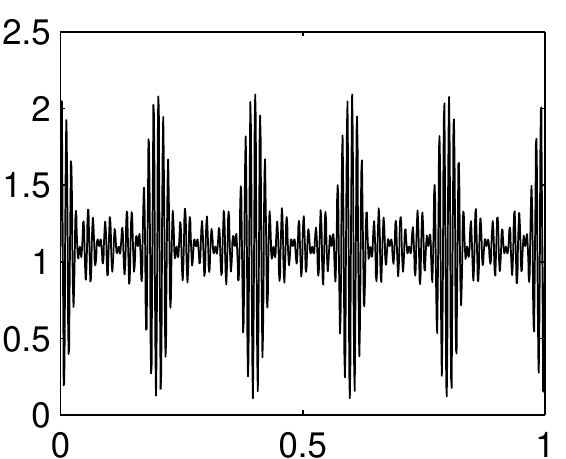}
	\caption{A snapshot of a direction solution to \eqref{eq:nr_1d_multiscale_pde} and the HMM solution (left)
	together with the material coefficient $A^{\varepsilon}$ from example \ref{section:example-three} (right).}
	\label{fig:1d_superimposed_varying2}
\end{figure}


\subsection{2D results}

In this section we present the numerical results for a two dimensional wave 
propagation problem over the unit square $Y = [0,1] \times [0,1]$.

\subsubsection{Example four} \label{section:example-four}

We define $A^{\varepsilon}(x)$ by the diagonal matrix,
\begin{equation}
	\begin{cases}
		A^{\varepsilon}(x) = \diag(a^{\varepsilon}(x), a^{\varepsilon}(x)) \\
		a^{\varepsilon}(x) = a(x/\varepsilon), \quad a(y) = 1.1 + \sin 2 \pi y_1.
	\end{cases}
\end{equation}
The corresponding homogenized matrix $\bar{A}$ in \eqref{eq:introduction:wavebar},
\begin{equation}
	\bar{A} = \diag(\sqrt{0.21}, 1.1),
\end{equation}
and as in 1D the initial data $f$ is defined as a Gaussian,
\begin{equation}
	\begin{cases}
		f(x) = \exp(-\|x-x_0\|^2_2/\sigma^2), \\
		x_0 =  [0.5 \quad 0.5], \quad \sigma = 0.1. \\
	\end{cases}
\end{equation}
We use the exponential kernel $K_{\text{exp}} \in \mathbb{K}^{1,\infty}$.
We let $T=1$ and the scale parameter $\varepsilon$ is set to $0.01$.
The macro scheme uses $H = 3.33 \cdot 10^{-3}$ and $K = 0.5 H$.
The micro scheme uses $h = \varepsilon/64$ and $k = 0.5 h$.
We show the numerical results in Figure \ref{eq:2d_dns}, \ref{eq:2d_hom} and \ref{eq:2d_hmm}.

\begin{figure}[tbp]
	\centering
	\includegraphics{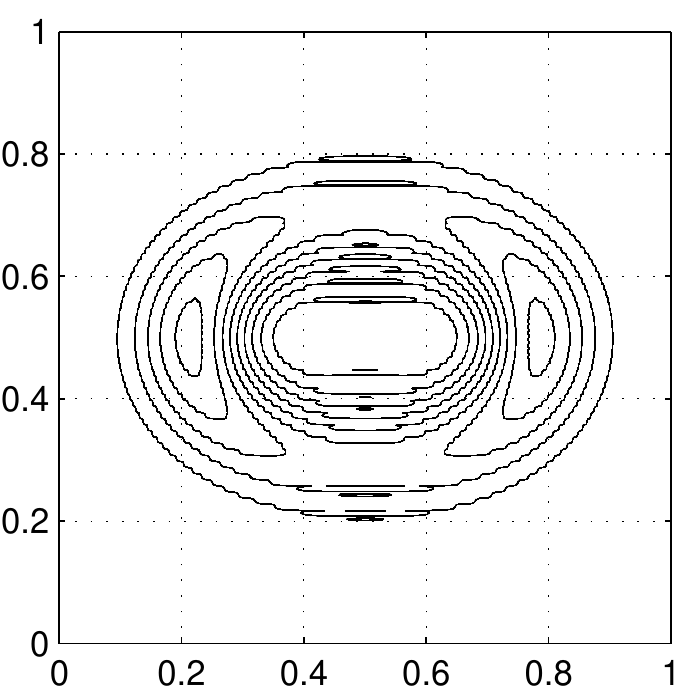}
	\caption{Full numerical simulation when $A^{\varepsilon}$ has only a fast scale.}
	\label{eq:2d_dns}
\end{figure}

\begin{figure}[tbp]
	\centering
	\includegraphics{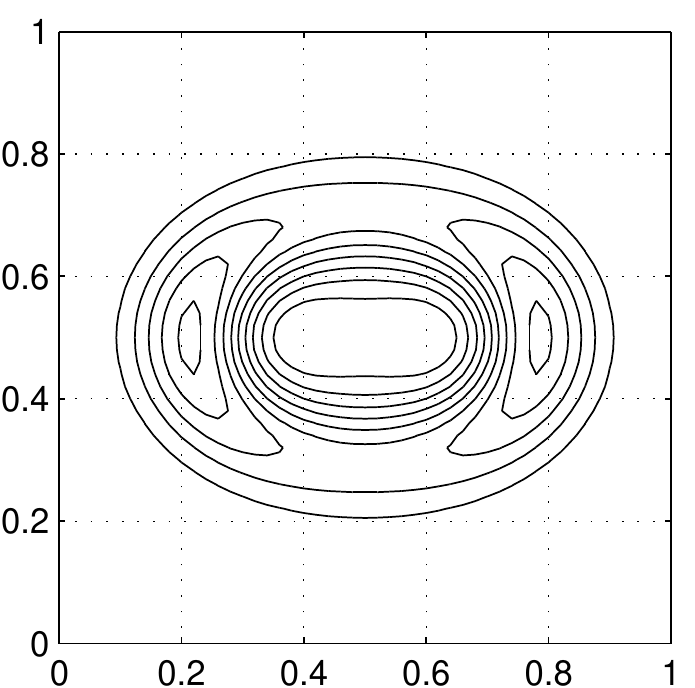}
	\caption{Direct solution of the homogenized equation when $\bar{A}$ is constant.}
	\label{eq:2d_hom}
\end{figure}

\begin{figure}[tbp]
	\centering
	\includegraphics{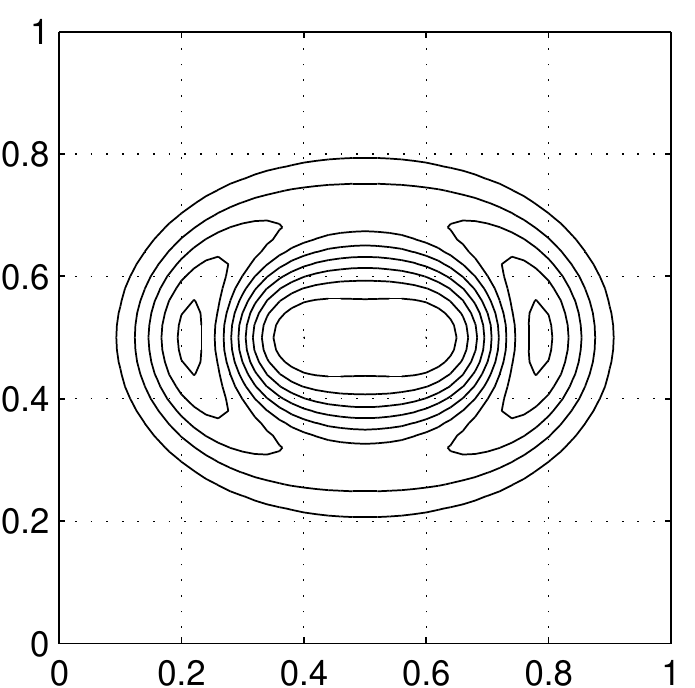}
	\caption{HMM approach, when $A^{\varepsilon}$ has only fast scales.}
	\label{eq:2d_hmm}
\end{figure}

\subsubsection{Example five} \label{section:example-five}

We let $A^{\varepsilon}(x)$ be defined by the diagonal matrix,
\begin{equation}	
	\begin{cases}
		A^{\varepsilon}(x) = \diag\left( a^{\varepsilon}(x), a^{\varepsilon}(x) \right) \\
		a^{\varepsilon}(x) = a(x,x/\varepsilon), \quad a(x,y) = 1.1 + \frac{1}{2}( \sin 2 \pi x_1 + \sin 2 \pi y_1). 
	\end{cases}
	\label{eq:nr_2d_pde_coef_varying}
\end{equation}
and the corresponding homogenized matrix $\bar{A}$ in \eqref{eq:introduction:wavebar},
\begin{equation}
	\begin{cases}
		\bar{A}(x) = \diag\left( \bar{a}(x), 1.1 \right), \\
		\bar{a}(x) = \sqrt{\alpha(x)^2 - \beta^2}, \quad \alpha(x) = 1.1 + 0.5 \sin 2 \pi x_1, \quad \beta = 0.5. \\
	\end{cases}
	\label{eq:nr_2d_hom_coef_varying}
\end{equation}
The numerical parameters are chosen the same as in example \ref{section:example-four}.
We show the numerical results in Figures \ref{fig:2d_dns_varying} and \ref{fig:2d_hmm_varying}.

\begin{figure}[tbp]
	\centering
	\includegraphics{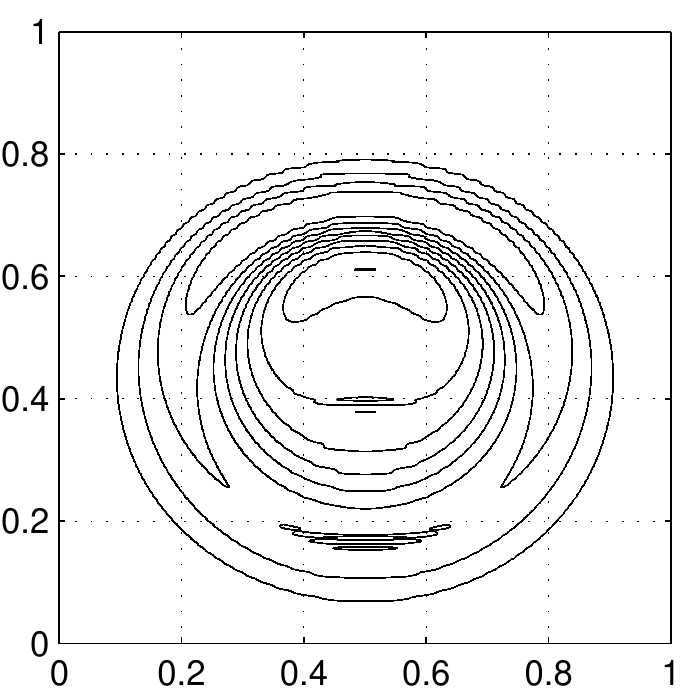}
	\caption{Full numerical simulation and $A^{\varepsilon}$ is defined by \eqref{eq:nr_2d_pde_coef_varying}.}
	\label{fig:2d_dns_varying}
\end{figure}


\begin{figure}[tbp]
	\centering
	\includegraphics{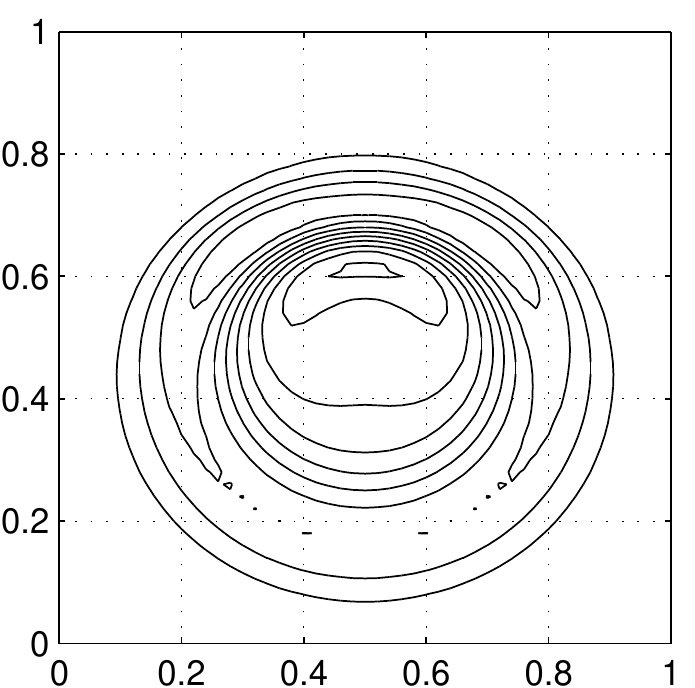}
	\caption{HMM approach and $A^{\varepsilon}$ is defined by \eqref{eq:nr_2d_pde_coef_varying}.}
	\label{fig:2d_hmm_varying}
\end{figure}

\subsection{3D results}

Here we present numerical results for a wave propagation problem in three dimensions in a locally periodic media
over the box $Y=[0,1]^3$.

\subsubsection{Example six} \label{section:example-six}

In this three dimensional problem $A^{\varepsilon}(x)$ is a diagonal matrix
\begin{equation}	
	\begin{cases}
		A^{\varepsilon}(x) = \diag\left(a^{\varepsilon}(x), a^{\varepsilon}(x), a^{\varepsilon}(x)\right), \\
		a^{\varepsilon}(x) = a(x/\varepsilon), \quad a(y) = 1.1 + \sin 2 \pi y_1, \\
	\end{cases}
	\label{eq:nr_longtime_pde}
\end{equation}
and the corresponding homogenized matrix $\bar{A}$ in \eqref{eq:introduction:wavebar} is
\begin{equation}
	\bar{A}(x) = \diag\left( \sqrt{0.21}, 1.1, 1.1 \right), \\
\end{equation}
and the initial data $f$ is defined as a Gaussian,
\begin{equation}
	\begin{cases}
		f(x) = \exp(-\|x-x_0\|^2_2/\sigma^2), \\
		x_0 = [ 0.5 \quad 0.5 \quad 0.5 ], \quad \sigma = 0.1. \\
	\end{cases} 
\end{equation}
In this experiment we have used $\varepsilon = 0.01$. 
The homogenized simulation uses 
$T = 0.25$, $H = 0.05$, $K = 0.25 H$.
The HMM solver uses 
$T = 0.25$, $H = 0.05$, $K = 0.25 H$ on the macro solver.
The micro solver uses 
$\eta = \varepsilon$, 
$\tau = 5 \varepsilon$, 
$h = \varepsilon/64$, 
$k = 0.3 h$ and a polynomial kernel $K \in \mathbb{K}^{9,9}$.
The results are presented in Figure \ref{fig:3d}.

\begin{remark}
	Due to the vast computational expense to use DNS we are unable to show DNS results.
\end{remark}

\begin{figure}[tbp]
	\centering
	\includegraphics{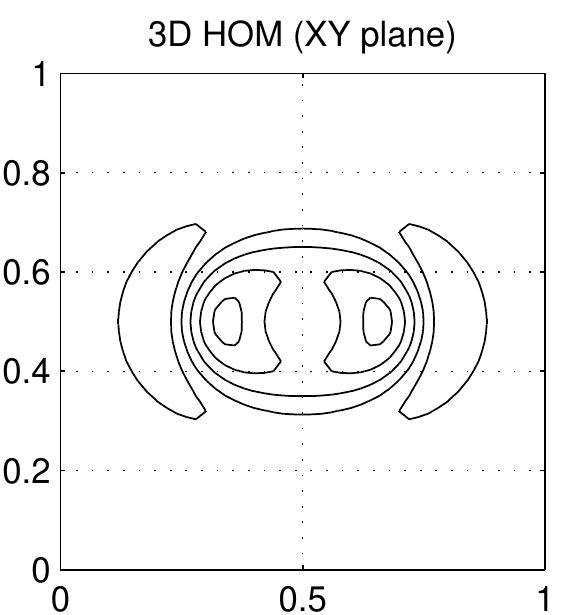} \includegraphics{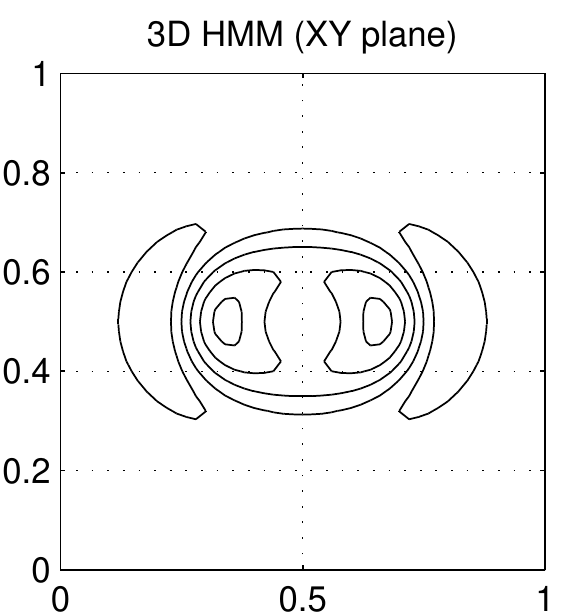}
	\includegraphics{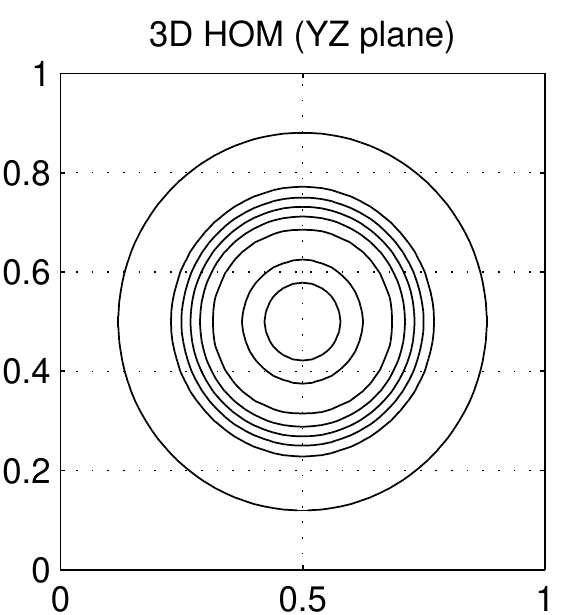} \includegraphics{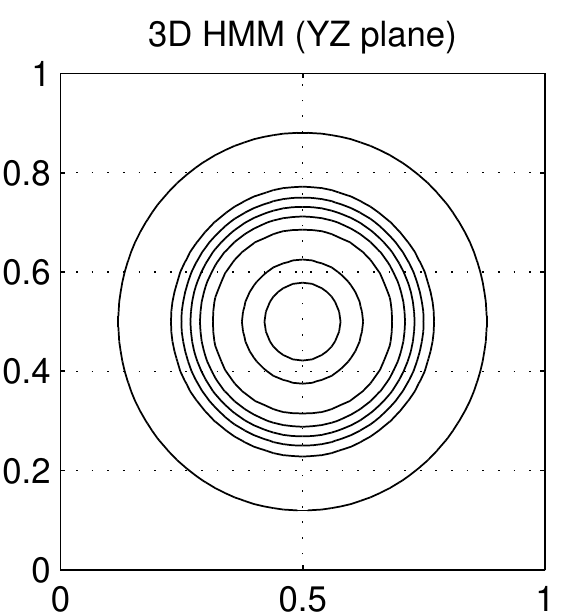}
	\caption{Three dimensional solutions of the homogenized equation and by using the HMM technique.}
	\label{fig:3d}
\end{figure}

\subsection{Long time example} \label{section:example-longtime}

We finally show a problem of the same form as example \ref{section:example-one}, but we will solve it for $T = \mathcal{O}(\varepsilon^{-2})$. 
In \cite{santosa1991} it was shown that the effective equation in this long time regime is of the form,
\begin{equation}
	\begin{cases}
		u_{tt} - \bar{A} u_{xx} - \beta \varepsilon^2 u_{xxxx} = 0, & Y \times \{ 0 \leq t \leq T \}, \\
		u = f, \quad u_t = 0, & Y \times \{ t = 0 \}.
	\end{cases}
	\label{eq:longtime-effective}
\end{equation}
This is still on the same flux form as assumed in \eqref{eq:hmm:wave} with $F = \bar{A} u_x + \beta \varepsilon^2 u_{xxx}$. 
Therefore, it turns out that we only need to make the HMM process a little bit more accurate for long time computations. 
The modifications needed are:
\begin{itemize}
	\item Initial data in micro solver needs to be of higher order. We use a third order polynomial to approximate the higher
		macro derivatives.
	\item The integration kernel needs to be smoother to give more accurate $F$ (error less than $\mathcal{O}(\varepsilon^2)$)
		in order to capture the correct dispersion relationship, i.e. $(\varepsilon/\eta)^q < \varepsilon^2$. This implies
		also that:
	\item The micro box needs to be a little bigger, $\tau, \eta \sim \varepsilon^{1-2/q}$, where $q$ 
		is defined in \eqref{eq:hmm:kernel}.
\end{itemize}
We present the numerical computations in Figure \ref{fig:1d_longtime_dns_hom}.

\begin{figure}[tbp]
	\begin{center}
		\includegraphics{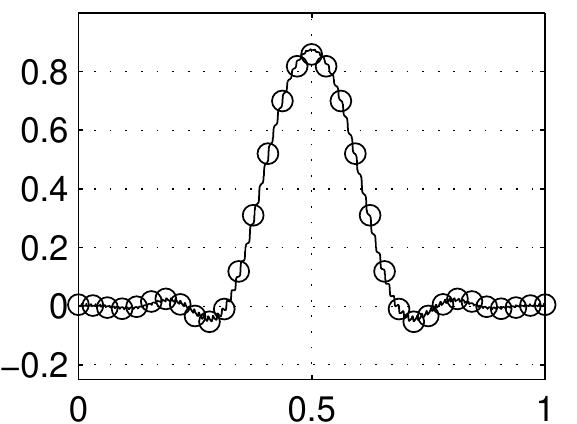} 
		\includegraphics{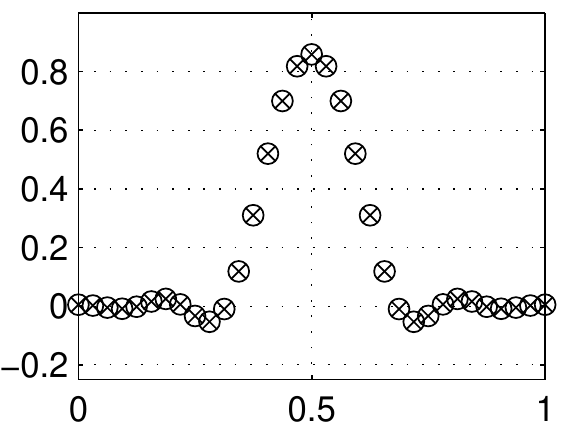}
	\end{center}
	\caption{1D longtime DNS simulation (thin line) compared to a finite difference solution of the effective equation
	\eqref{eq:longtime-effective} (circles) and a HMM solution (crosses).}
	\label{fig:1d_longtime_dns_hom}
\end{figure}

\section{Conclusions} \label{section:conclusions}

We have developed and analyzed numerical methods for multi-scale wave equations with oscillatory coefficients. The methods are based
on the framework of the heterogeneous multi-scale method (HMM) and have substantially lower computational complexity than standard
discretization algorithms. Convergence proofs for finite time approximation are presented in the case of periodic coefficients in
multiple dimensions. Numerical experiments in one, two and three spatial dimensions show the accuracy and efficiency of the new
techniques.  Finally we explored simulation over very long time intervals. The effective equation for very long time is different
from the finite time homogenized equation. Dispersive effects enter, and the effective equation must be modified~\cite{santosa1991}.
It is interesting to note that our HMM approach with just minor modifications accurately captures these dispersive phenomena.

\appendix

\section{Numerical schemes} \label{section:schemes}

We present a detailed description of the numerical schemes used in the macro and micro solvers.  
The schemes are designed for one, two, three dimensions and can be generalized to higher dimensions.
All the schemes are second order accurate in both time and space.

\subsection{1D equation}

The finite difference scheme on the macro level has the form
\begin{equation}
	\left\{
	\begin{aligned}
		& U^{n+1}_m = 2 U^n_m - U^{n-1}_m + K^2 Y^n_m, \\
		& Y^n_m = \frac{1}{H} \left( F^n_{m+\frac{1}{2}} - F^n_{m-\frac{1}{2}} \right), \\
		& F^n_{m \pm 1/2} = F(x_{m \pm 1/2},P^n_{m \pm 1/2}), \\
	\end{aligned}
	\right.
	\label{eq:1d_macro_scheme}
\end{equation}
where $P^n_{m - 1/2} = \frac{1}{H} \left( U^n_m - U^n_{m-1} \right)$ and $P^n_{m + 1/2} = \frac{1}{H} \left( U^n_{m+1} - U^n_m
\right)$.  The micro level scheme defined analogously:
\begin{equation} 
	\left\{
	\begin{aligned}
		& u^{n+1}_m = 2 u^n_m - u^{n-1}_m + k^2 y^n_m, \\
		& y^n_m = \frac{1}{h} \left( f^n_{m+1/2} - f^n_{m-1/2} \right), \\
		& f^n_{m+1/2} =  a_{m+\frac{1}{2}} \frac{u^n_{m+1} - u^n_m}{h}, \\
		& f^n_{m-1/2} =  a_{m-\frac{1}{2}} \frac{u^n_m - u^n_{m-1}}{h}.
	\end{aligned}
	\right.
	\label{eq:1d_micro_scheme}
\end{equation}

\subsection{2D equation}

The two dimensional problem is discretized with a scheme with the following schemes:
The finite difference scheme on the macro level
\begin{equation} 
	\left\{
	\begin{aligned}
		& U^{n+1}_m = 2 U^n_m - U^{n-1}_m + K^2 Y^n_m, \\
		& Y^n_m = \frac{1}{H} \left( F^{(1)}_{m+\frac{1}{2} e_1} - F^{(1)}_{m-\frac{1}{2} e_1} \right) + \frac{1}{H} \left( F^{(2)}_{m+\frac{1}{2} e_2} - F^{(2)}_{m-\frac{1}{2} e_2} \right), \\
		& F^{(k)}_{m \pm \frac{1}{2} e_k} = F(x_{m \pm \frac{1}{2} e_k}, P^n_{m \pm \frac{1}{2} e_k}), \quad t=t_n, \\
	\end{aligned}
	\right.
	\label{eq:2d_macro_scheme}
\end{equation}
where $P^n_{m + \frac{1}{2} e_2}$ is given by (see Figure \ref{fig:htwodisc5})
\begin{equation}
	P^n_{m + \frac{1}{2} e_2} = 
	\begin{bmatrix} 
		\frac{1}{2H}\left( \frac{U_{m+e_1} + U_{m+e_1+e_2}}{2} - \frac{U_{m-e_1} + U_{m-e_1+e_2}}{2} \right) &
		\frac{1}{H}\left( U_{m+e_2} - U_{m} \right)
	\end{bmatrix}.
	\label{eq:P_in_2d}
\end{equation}
and the other $P^n_{m \pm \frac{1}{2} e_k}$ are components defined analogously.
The micro level scheme is formulated as
\begin{equation}
	\left\{
	\begin{aligned}
		& u^{n+1}_m = 2 u^n_m - u^{n-1}_m + k^2 y^n_m \\
		& y^n_m = \frac{1}{h} \Bigl( f^{(1)}_{m+\frac{1}{2}e_1} - f^{(1)}_{m-\frac{1}{2}e_1} \Bigl) + \frac{1}{h} \Bigl( f^{(2)}_{m+\frac{1}{2}e_2} - f^{(2)}_{m-\frac{1}{2}e_2} \Bigr) \\
		& f^{(1)}_{m+\frac{1}{2} e_1} = \frac{a^{(11)}_{m+\frac{1}{2}e_1}}{h} \Bigl( u^n_{m+e_1} - u^n_{m} \Bigr) + \frac{a^{(12)}_{m+\frac{1}{2}e_1}}{2 h} \Bigl( \frac{u^n_{m+e_2} + u^n_{m+e_1+e_2}}{2} - \frac{u^n_{m-e_2} + u^n_{m+e_1-e_2}}{2} \Bigr) \\
		& f^{(1)}_{m-\frac{1}{2} e_1} = \frac{a^{(11)}_{m-\frac{1}{2}e_1}}{h} \Bigl( u^n_{m} - u^n_{m-e_1} \Bigr) + \frac{a^{(12)}_{m-\frac{1}{2}e_1}}{2 h} \Bigl( \frac{u^n_{m+e_2} + u^n_{m-e_1+e_2}}{2} - \frac{u^n_{m-e_2} + u^n_{m-e_1-e_2}}{2} \Bigr) \\
		& f^{(2)}_{m+\frac{1}{2} e_2} = \frac{a^{(21)}_{m+\frac{1}{2} e_2}}{2 h} \Bigl( \frac{u^n_{m+e_1} + u^n_{m+e_1+e_2}}{2} - \frac{u^n_{m-e_1} + u^n_{m-e_1+e_2}}{2} \Bigr) + \frac{a^{(22)}_{m+\frac{1}{2}e_2}}{h} \Bigl( u^n_{m+e_2} - u^n_{m} \Bigr) \\
		& f^{(2)}_{m-\frac{1}{2} e_2} = \frac{a^{(21)}_{m-\frac{1}{2} e_2}}{2 h} \Bigl( \frac{u^n_{m+e_1} + u^n_{m+e_1-e_2}}{2} - \frac{u^n_{m-e_1} + u^n_{m-e_1-e_2}}{2} \Bigr) + \frac{a^{(22)}_{m-\frac{1}{2}e_2}}{h} \Bigl( u^n_{m} - u^n_{m-e_2} \Bigr) \\
	\end{aligned}
	\right.
	\label{eq:2d_micro_scheme}
\end{equation}
When approximating $f^{(2)}_{m-\frac{1}{2}e_1}$ we take the average of $u^n_{m \pm e_2}$ and $u^n_{m + e_1 \pm e_2}$ to approximate
$u(x_{m + \frac{1}{2}e_1 \pm e_2},t^n)$. Then we use those two averages to approximate the $y$ derivate of $u$ at
$u(x_{m-\frac{1}{2}e_1})$.  The scheme is second order in both space and time.

\subsection{3D equation}

The macro scheme for the three dimensional problem is of the form
\begin{equation}
	\left\{
	\begin{aligned}
		& U^n_m = 2 U^n_m - U^{n-1}_m + K^2 Y^n_m, \\
		& Y^n_m = \frac{1}{H} \left( F^{(1,n)}_{m+\frac{1}{2} e_1} - F^{(1,n)}_{m-\frac{1}{2} e_1} \right)
		    \!+\! \frac{1}{H} \left( F^{(2,n)}_{m+\frac{1}{2} e_2} - F^{(2,n)}_{m-\frac{1}{2} e_2} \right)
		    \!+\! \frac{1}{H} \left( F^{(3,n)}_{m+\frac{1}{2} e_3} - F^{(3,n)}_{m-\frac{1}{2} e_3} \right),  \\
		& F^n_{m \pm \frac{1}{2} e_k} = F(x_{m \pm \frac{1}{2} e_k}, P^n_{m \pm \frac{1}{2} e_k}),
	\end{aligned}
	\right.
\end{equation}
where $P^n_{m+\frac{1}{2} e_3}$ is defined as,
\begin{equation} 
	P^n_{m+\frac{1}{2} e_3} = 
	\begin{bmatrix}
		\frac{1}{2H}\left( \frac{U_{m+e_1} + U_{m+e_1+e_3}}{2} - \frac{U_{m-e_1} + U_{m-e_1+e_3}}{2} \right)	\\
		\frac{1}{2H}\left( \frac{U_{m+e_2} + U_{m+e_2+e_3}}{2} - \frac{U_{m-e_2} + U_{m-e_2+e_3}}{2} \right) \\
		\frac{1}{H} \left( U_{m+e_3} - U_{m} \right)
	\end{bmatrix},
	\label{eq:3d_macro_scheme}
\end{equation}
and the other $P^n_{m \pm \frac{1}{2} e_k}$ defined analogously.
The micro level scheme is a second order accurate scheme defined analogous with the 2D scheme \eqref{eq:2d_micro_scheme}
\begin{gather} 
	\left\{
	\begin{split}
		& u^{n+1}_m = 2 u^n_m - u^{n-1}_m + k^2 y^n_m \\
		& y^n_m = \frac{1}{h} \Bigl( f^{(1)}_{m+\frac{1}{2}e_1} - f^{(1)}_{m-\frac{1}{2}e_1} \Bigl) + \frac{1}{h} \Bigl( f^{(2)}_{m+\frac{1}{2}e_2} - f^{(2)}_{m-\frac{1}{2}e_2} \Bigr) + \frac{1}{h} \Bigl(  f^{(3)}_{m+\frac{1}{2}e_3} - f^{(3)}_{m-\frac{1}{2}e_3} \Bigr) \\
		& \begin{split}
			f^{(1)}_{m+\frac{1}{2} e_1} = 
			    \frac{a^{(11)}_{m+\frac{1}{2}e_1}}{ h } \Bigl( u^n_{m+e_1} - u^n_{m} \Bigr) 
			  + \frac{a^{(12)}_{m+\frac{1}{2}e_1}}{2 h} \Bigl( \frac{u^n_{m+e_1+e_2} + u^n_{m+e_2}}{2} - \frac{u^n_{m+e_1-e_2} + u^n_{m-e_2}}{2} \Bigr) \\
			  + \frac{a^{(13)}_{m+\frac{1}{2}e_1}}{2 h} \Bigl( \frac{u^n_{m+e_1+e_3} + u^n_{m+e_3}}{2} - \frac{u^n_{m+e_1-e_3} + u^n_{m+e_3}}{2} \Bigr)
			\end{split} \\
		& \begin{split}
			f^{(1)}_{m-\frac{1}{2} e_1} = 
			    \frac{a^{(11)}_{m-\frac{1}{2}e_1}}{ h } \Bigl( u^n_{m} - u^n_{m-e_1} \Bigr) 
			  + \frac{a^{(12)}_{m-\frac{1}{2}e_1}}{2 h} \Bigl( \frac{u^n_{m+e_2} + u^n_{m-e_1+e_2}}{2} - \frac{u^n_{m-e_2} + u^n_{m-e_1-e_2}}{2} \Bigr) \\
			  + \frac{a^{(13)}_{m-\frac{1}{2}e_1}}{2 h} \Bigl( \frac{u^n_{m+e_3} + u^n_{m-e_1+e_3}}{2} - \frac{u^n_{m-e_3} + u^n_{m-e_1-e_3}}{2} \Bigr)
			\end{split} \\
		& \begin{split}
			f^{(2)}_{m+\frac{1}{2} e_2} = 
			    \frac{a^{(21)}_{m+\frac{1}{2}e_2}}{2 h} \Bigl( \frac{u^n_{m+e_1+e_2} + u^n_{m+e_1}}{2} - \frac{u^n_{m-e_1+e_2} + u^n_{m-e_1}}{2} \Bigr) 
			  + \frac{a^{(22)}_{m+\frac{1}{2}e_2}}{ h } \Bigl( u^n_{m+e_2} -u^n_{m} \Bigr) \\
			  + \frac{a^{(23)}_{m+\frac{1}{2}e_2}}{2 h} \Bigl( \frac{u^n_{m+e_2+e_3} + u^n_{m+e_3}}{2} - \frac{u^n_{m+e_2-e_3} + u^n_{m-e_3}}{2} \Bigr)
			\end{split} \\		
		& \begin{split}
			f^{(2)}_{m-\frac{1}{2} e_2} = 
			    \frac{a^{(21)}_{m-\frac{1}{2}e_2}}{2 h} \Bigl( \frac{u^n_{m+e_1} + u^n_{m+e_1-e_2}}{2} - \frac{u^n_{m-e_1} + u^n_{m-e_1-e_2}}{2} \Bigr) 
			  + \frac{a^{(22)}_{m-\frac{1}{2}e_2}}{ h } \Bigl( u^n_{m} -u^n_{m-e_2} \Bigr) \\
			  + \frac{a^{(23)}_{m-\frac{1}{2}e_2}}{2 h} \Bigl( \frac{u^n_{m+e_3} + u^n_{m-e_2+e_3}}{2} - \frac{u^n_{m-e_3} + u^n_{m-e_2-e_3}}{2} \Bigr)
			\end{split} \\
		& \begin{split}
			f^{(3)}_{m+\frac{1}{2} e_3} = 
			    & \frac{a^{(31)}_{m+\frac{1}{2}e_3}}{2 h} \Bigl( \frac{u^n_{m+e_1+e_3} + u^n_{m+e_1}}{2} - \frac{u^n_{m-e_1+e_3} + u^n_{m-e_1}}{2} \Bigr) \\
			  + & \frac{a^{(32)}_{m+\frac{1}{2}e_3}}{2 h} \Bigl( \frac{u^n_{m+e_2+e_3} + u^n_{m+e_2}}{2} - \frac{u^n_{m-e_2+e_3} + u^n_{m-e_2}}{2} \Bigr) 
			  +   \frac{a^{(33)}_{m+\frac{1}{2}e_3}}{ h } \Bigl( u^n_{m+e_3} - u^n_{m} \Bigr)
			\end{split} \\
		& \begin{split}
			f^{(3)}_{m-\frac{1}{2} e_3} = 
			    & \frac{a^{(31)}_{m-\frac{1}{2}e_3}}{2 h} \Bigl( \frac{u^n_{m+e_1} + u^n_{m+e_1-e_3}}{2} - \frac{u^n_{m-e_1} + u^n_{m-e_1-e_3}}{2} \Bigr) \\
			  + & \frac{a^{(32)}_{m-\frac{1}{2}e_3}}{2 h} \Bigl( \frac{u^n_{m+e_2} + u^n_{m+e_2-e_3}}{2} - \frac{u^n_{m-e_2} + u^n_{m-e_2-e_3}}{2} \Bigr) 
			  +   \frac{a^{(33)}_{m-\frac{1}{2}e_3}}{ h } \Bigl( u^n_{m} - u^n_{m-e_3} \Bigr)
			\end{split} 
	\end{split}
	\right.
	\label{eq:3d_micro_scheme}
\end{gather}


\bibliographystyle{plain}
\bibliography{holst}

\end{document}